\definecolor{webgreen}{rgb}{0,.5,0}
\definecolor{webbrown}{rgb}{.6,0,0}
\newcommand{\seqnum}[1]{\href{http://oeis.org/#1}{\underline{#1}}}
\theoremstyle{plain}
\newtheorem{theorem}{Theorem}
\newtheorem{lemma}[theorem]{Lemma}
\newtheorem{proposition}[theorem]{Proposition}
\theoremstyle{remark}
\def\adots{
  \mathinner{\mkern1mu\raise1pt\hbox{.}\mkern2mu\raise4pt\hbox{.}
  \mkern2mu\raise7pt\vbox{\kern7pt\hbox{.}}\mkern1mu}}
\begin{document}

\begin{center}

 \title[Fibonacci identities from Jordan Identities]
 {Fibonacci identities from Jordan Identities}
 \author{Santiago Alzate}
 \address{Instituto de Matem\'aticas\\
		Universidad de Antioquia\\
Medell\'in\\
Colombia.}
\email{santiago.alzate9@udea.edu.co}
 \author{Oscar Correa}
 \address{Instituto de Matem\'aticas\\
		Universidad de Antioquia\\
Medell\'in\\
Colombia.}
\email{oscar.correa@udea.edu.co}
  \author{Rigoberto Fl\'orez}
 \address{Department of Mathematical Sciences\\
		The Citadel\\
		Charleston, SC \\
		U.S.A.}
  \email{rigo.florez@citadel.edu}
 \end{center}

\subjclass[2010]{11B39, 15A16, 17C05  (primary);  65Q30 (secondary)}

\keywords{Fibonacci number, Lucas number, Pell number, Matrix of a recurrences relation, Jordan identity, Jordan product, ternary operation. }

\date{\today}

\begin{abstract} 
In this paper, we connect two well established theories, the Fibonacci numbers and the Jordan algebras. We give a series of matrices, from literature, 
used to obtain recurrence relations of second-order and polynomial sequences. We also give some identities known in special Jordan Algebras. 
The matrices play a bridge role between  both theories. The  mentioned matrices connect both areas of mathematics, special Jordan algebras  
and recurrence relations, to obtain new identities and classic identities in Fibonacci numbers, Lucas numbers, Pell numbers, binomial transform, 
tribonacci numbers, and polynomial sequences among others.  The list of identities in this paper contains just a few examples of many that the 
reader can find using this technique.
\end{abstract}

\thanks{This work was partially supported by The Citadel foundation.}

\maketitle


\section{Introduction}
Many authors have used power of matrices to study recurrence relations. In 1981 Gould \cite{Gould} wrote a historical paper about the origins of using 
matrices in research with the Fibonacci sequence. Gould's paper has a bibliography with 45 items. Since then many papers have appeared using 
this technique.  

The study of the Fibonacci sequence and its identities became more visible when in 1963 Hoggatt and Brousseau founded the Fibonacci 
Quarterly journal. By the same time researchers in another area of mathematics  were working actively finding identities in Jordan algebras  
---our interest here---  (see for example, \cite{Glennie, Hall,JacobsonMacDonald, Jacobson, JacobsonPaige}). These  two areas of mathematics may have  
several topics in common. Therefore, the main objective of this paper, through examples, is to show some connections between both,  the recursive  
sequences  and the special Jordan algebra identities. We are wondering if the experts in Jordan algebras can find a deeper  connection.  
There are still many things, on how this connection works, that we would like to understand better. For example, we believe there is a direct relationship 
between the power associativity in Jordan identities and the arguments of the Fibonacci recurrence.

In this paper, we use matrices to bridge recurrence relations identities with special Jordan algebras identities.  We take a collection of matrices associated 
to sequences (Fibonacci sequences, Lucas sequences, and matrices associated to other recursive identities) from the literature; we also take a collection of  
special Jordan algebras identities, from the literature, to obtain identities in numerical sequences. 

Using identities from abstract algebra we can obtain more complex, general, and sophisticated numerical identities. For example, we give classic identities, 
new identities, and very complex identities in Fibonacci identities, Lucas identities, Pell identities, and many others. 

Williams \cite{Williams} and Mc Laughlin \cite{McLaughlin} give simple forms to construct sequences from  $2 \times 2$ matrices. Here we use 
the technique given in  \cite{McLaughlin} and the special Jordan algebra identities to show a new form to construct identities for recursive 
relations of order two. 

\section{Some Previous Results and Motivation }\label{Fibonacci_Identities_from_Jordan_Identities}

In this section, we give a series of matrices, from literature, used to obtain recurrence relations of second-order and polynomial sequences. 
Most of these matrices can be found in \cite{Wani,Gould,MelhamShannon,koshy,sloane}. In Section \ref{GeneralCase}, there is a more  
general form for powers of matrices associated to recurrence relations of order two. 

Our aim is to use  matrices to connect the special Jordan algebra identities with the recurrence relations to obtain new identities
associated to numerical sequences or polynomial sequences. 

\subsection{Fibonacci Matrices and generalized Fibonacci matrices} From \eqref{FibonacciGenCase1} we obtain these sequences:  the matrix   
$\mathcal{F}_{1}^{n}$  is the matrix associated to Fibonacci sequence. The matrix $\mathcal{G}_{2}^{n}$ gives rise to \emph{Jacobsthal numbers}  
$a_{n} = a_{n-1} + 2a_{n-2}$, with  $a_{0} = 0, a_{1} = 1$ (\seqnum{A001045}).  From \cite{Gould} we have the general case, the matrix 
$\mathcal{G}_{b}^{n}$ gives rise to 
\begin{equation}\label{generalizedFibo}
g_{n} = g_{n-1} + b \cdot g_{n-2}, \quad \text{ with } \quad g_{0} = 0, \quad g_{1} = 1, \quad \text{ where } b \in \mathbb{Z}_{>0}.
\end{equation}
We now give sequences associated with some values of  $b$.   
From \eqref{FibonacciGenCase1} with $b=1$ we have $\mathcal{F}_{1}^n$ the Fibonacci sequence; the equation \eqref{FibonacciGenCase1} 
with $b=2$ gives the Jacobsthal numbers $J_n:=g_n$ see \seqnum{A001045}; the equation   \eqref{FibonacciGenCase1} with $b= 3$ 
gives  \seqnum{A006130}; the equation \eqref{FibonacciGenCase1} with $b=4$ gives  \seqnum{A006131};  the equation  
\eqref{FibonacciGenCase1} with $b=5$ gives \seqnum{A015440}; and the equation \eqref{FibonacciGenCase1} with $b= 6$ 
gives \seqnum{A015441}. We summarize these results in \eqref{FibonacciGenCasePowers1}.

\begin{equation}\label{FibonacciGenCase1}
\mathcal{F}_{1} :=
 \left[
	\begin{array}{ll}
		1 & 1 \\
		1 & 0 \\
	\end{array}
\right];
\hspace{.5cm}
\mathcal{G}_{2} :=
 \left[
	\begin{array}{ll}
		1 & 2 \\
		1 & 0 \\
	\end{array}
\right];
\hspace{.5cm}
\mathcal{G}_{b} :=
 \left[
	\begin{array}{ll}
		1 & b \\
		1 & 0 \\
	\end{array}
\right]. 
\end{equation}
The powers of these matrices are

\begin{equation}\label{FibonacciGenCasePowers1}
\mathcal{F}_{1}^{n} =
 \left[
\begin{array}{ll}
 F_{n+1} & F_{n} \\
 F_{n} & F_{n-1}\\
\end{array}
\right]; \quad 
\mathcal{G}_{2}^{n} =
\left[
\begin{array}{ll}
 J_{2 n-1} & J_{2 n} \\
 J_{2 n} & J_{2 n+1} \\
\end{array}
\right]; \quad 
\mathcal{G}_{b}^{n} =
 \left[
	\begin{array}{ll}
		g_{n+1} & b g_{n}\\
		g_{n+1} & b g_{n-1} \\
	\end{array}
\right].
\end{equation}

The powers of the matrix $\mathcal{L}$ give rise to a matrix where the entries are Lucas numbers and Fibonacci numbers \cite{Johnson}.  

\begin{equation}\label{FiboLucasMatricesM} 
\hspace{.5cm}
\mathcal{L}:= (1/2)
\left[
	\begin{array}{ll}
		1 & 5 \\
		1 & 1 \\
	\end{array}
\right]; 
\hspace{.5cm}
\mathcal{L}^n= (1/2)
\left[
	\begin{array}{ll}
		L_{n} & 5 	F_{n} \\
		F_{n} & L_{n} \\
	\end{array}
\right].
\end{equation}
 
The \emph{generalized Fibonacci numbers} are defined as $w_{n}=p w_{n-1}-q w_{n-2}$, where $w_{0}=0$, and 
$w_{1}=1$ for $p$ and $q$ in $\mathbb{Z}_{\ge 0}$.  This recurrence relation is represented by the power of the matrix 
$\mathcal{W}$  in \eqref{FibonacciGenCase3} (see \cite{Wani,Gould,MelhamShannon}). Particular cases of this sequence 
are in  \seqnum{A015518} and \seqnum{A006190}.  

\begin{equation}\label{FibonacciGenCase3}
\mathcal{W}:=
\left[
	\begin{array}{ll}
		p & -q \\
		1 & 0 \\
	\end{array}
\right];
\hspace{1cm}
\mathcal{W}^n=
\left[
	\begin{array}{ll}
		w_{n+1} & -qw_{n} \\
		w_{n} & -qw_{n-1} \\
	\end{array}
\right].
\end{equation}

\subsection{Pell matrices and generalized Pell matrices.} The matrices in \eqref{FibonacciGenCase2} are obtained from particular cases of \eqref{FibonacciGenCase3}  
(see also \cite{Bicknell,Gould}). Using \eqref{FibonacciGenCase2} and power matrices we have that: $\mathcal{P}_{2}^{n}$ gives rise to \emph{Pell numbers}  
$p_{n}=2 p_{n-1}+p_{n-2}$, where $p_{0}=0$, $p_{1}=1$;  the matrix $\mathcal{P}_{3}^{n}$ gives rise to $b_{n}=3 b_{n-1}+b_{n-2}$, 
where $b_{0}=0$, $b_{1}=1$, and in general $\mathcal{P}_{b}^{n}$ gives rise to $c_{n}=b \cdot c_{n-1}+c_{n-2}$, where $c_{0}=0$, $c_{1}=1$.
Sequences associated with some values of  $b$; $b=2$ gives \seqnum{A000129}; $b= 3$ gives \seqnum{A006190}; $b=4$ gives \seqnum{A001076}; 
$b=5$ gives \seqnum{A052918}; and $b= 6$ gives \seqnum{A005668}. We summarize these results in \eqref{FibonacciGenCasePowers2}. 

\begin{equation}\label{FibonacciGenCase2}
\mathcal{P}_{2}:=
 \left[
	\begin{array}{ll}
		2 & 1 \\
		1 & 0 \\
	\end{array}
\right];
\hspace{.5cm}
\mathcal{P}_{3} :=
 \left[
	\begin{array}{ll}
		3 & 1 \\
		1 & 0 \\
	\end{array}
\right];
\hspace{.5cm}
\mathcal{P}_{b}:=
 \left[
	\begin{array}{ll}
		b & 1 \\
		1 & 0 \\
	\end{array}
\right].
\end{equation}
The powers of these matrices are

\begin{equation}\label{FibonacciGenCasePowers2}
\mathcal{P}_{2}^n=
\left[
	\begin{array}{ll}
		p_{n+1} & p_{n} \\
		p_{n}  & p_{n-1} \\
	\end{array}
\right];
\quad
\mathcal{P}_{3}^n=
\left[
	\begin{array}{ll}
		b_{n+1} & b_{n} \\
		b_{n} &  b_{n-1}\\
	\end{array}
\right];
\quad
\mathcal{P}_{b}^n=
\left[
	\begin{array}{ll}
		c_{n+1} & c_{n} \\
		c_{n} &  c_{n-1}\\
	\end{array}
\right].
\end{equation}

\subsection{Fibonacci Polynomials}\label{Fibonacci_polynomials} The following matrices that give rise to Fibonacci polynomials can be found in \cite{koshy}.
\begin{equation}\label{FiboPolyMatricesT3} 
\mathcal{Q}(x) :=
 \left[
	\begin{array}{ll}
		x & 1 \\
		1 & 0 \\
	\end{array}
\right];
\hspace{.5cm}
\mathcal{Q}^n(x) =
 \left[
	\begin{array}{ll}
		F_{n+1}(x) & F_{n}(x)  \\
		F_{n}(x)  &  F_{n-1}(x)\\
	\end{array}
\right].
\end{equation}

\subsection{Special Jordan Algebra background}\label{MotivationSection} 
In this section, we give the background of special Jordan algebras and three identities needed to show the examples required for this motivation section.   
The identities in Lemma \ref{JordanIdentitiesM} are part of Lemma \ref{JordanIdentities} on page \pageref{JordanIdentities}.
Part of the discussion here and some notation can be found  in  \cite{JacobsonMacDonald, Jacobson, JacobsonPaige}. 

A Jordan algebra $\mathcal{A}$ is a non-associative algebra over a field not of characteristic $2$ whose multiplication satisfies that  $a\cdot b=b \cdot a$  
(commutative law) and  $(a^2 \cdot b)\cdot a=a^2\cdot (b \cdot a)$ (Jordan identity). Let $(\mathcal{A}, +, \times, *)$ be the vector space of all $n\times n$   
matrices over $\mathbb{R}$,  where $+$, $\times$, and $*$ are the matrix addition, matrix product, and the scalar product, respectively. For simplicity,   
we use $ab$ instead of $a \times b$. 
(In this paper $n=2$.)  The vector space  $\mathcal{A}$ gives rise to the \emph{special Jordan algebra}  $\mathcal{A}^{+}=(\mathcal{A}, +, \cdot, *)$, where the 
\emph{Jordan product} (denoted by $\cdot$) is defined as $a\cdot b=(ab+ba)/2$.  We use $ \{a,b,c \} $ to denote this ternary operation 
 \begin{equation}\label{TernaryOpEquiM} 
 \{a,b,c \} =(1/2)\left[(a b) c+ (c b) a\right].
\end{equation} 

\begin{lemma}[\cite{JacobsonMacDonald,JacobsonPaige}] \label{JordanIdentitiesM}  Let  $\mathcal{A}$ be a special Jordan algebra with the ternary  
operation $\{\cdot,\cdot,\cdot\}$. If $a,b,c \in \mathcal{A}$, where  $a\cdot b$ is the Jordan product, then these identities hold
 
\begin{enumerate}
\item \label{JordanIdentityM:1} $\{ a^{n}, a^{m}, b^{n} \}   =  a^{(m+n)} \cdot b^n $, 
\item \label{JordanIdentityM:2}  $\{ a^{l} ,\{a^{m},b,a^{m}\},a^{l} \}  = \{  a^{m+l},b,a^{m+l}\} $, 
\item \label{JordanIdentityM:3} $\{ a^{n}, b, a^{n} \} \cdot  c  = 2 \{  a^{n},( a^{n} \cdot   b ), c \} - \{ a^{2n}, b, c \} $, 
\item \label{JordanIdentityM:9} $\{ a^{m}, b, a^{n} \} \cdot a^{l}  = \{  a^{m},( b\cdot a^{l}),a^{n}\} $. 
\end{enumerate}
\end{lemma}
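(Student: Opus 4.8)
The plan is to use the fact that $\mathcal{A}$ is a \emph{special} Jordan algebra: every element is an honest $n\times n$ matrix, so both the Jordan product and the ternary bracket can be rewritten with the associative matrix product, and each of the four identities collapses to an equality of $\mathbb{R}$-linear combinations of associative monomials. Precisely, for $x,y,z\in\mathcal{A}$ associativity gives
\[
x\cdot y=\tfrac12\,(xy+yx),\qquad \{x,y,z\}=\tfrac12\bigl[(xy)z+(zy)x\bigr]=\tfrac12\,(xyz+zyx),
\]
and the only extra facts needed are linearity, the commutation and addition law $a^{p}a^{q}=a^{q}a^{p}=a^{p+q}$ for powers of a single element, and the collapse $\{a^{k},b,a^{k}\}=\tfrac12(a^{k}ba^{k}+a^{k}ba^{k})=a^{k}ba^{k}$.

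First I would dispatch (\ref{JordanIdentityM:1}) and (\ref{JordanIdentityM:2}), which become one-line checks after unwinding. For (\ref{JordanIdentityM:1}), $\{a^{n},a^{m},b^{n}\}=\tfrac12(a^{n}a^{m}b^{n}+b^{n}a^{m}a^{n})=\tfrac12(a^{m+n}b^{n}+b^{n}a^{m+n})=a^{m+n}\cdot b^{n}$. For (\ref{JordanIdentityM:2}) the inner bracket collapses, $\{a^{m},b,a^{m}\}=a^{m}ba^{m}$, so the left side is $\{a^{l},a^{m}ba^{m},a^{l}\}=a^{l}a^{m}ba^{m}a^{l}=a^{m+l}ba^{m+l}=\{a^{m+l},b,a^{m+l}\}$.

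Then I would verify (\ref{JordanIdentityM:3}) and (\ref{JordanIdentityM:9}) by expanding both sides in the same way. In (\ref{JordanIdentityM:3}) the left side is $(a^{n}ba^{n})\cdot c=\tfrac12(a^{n}ba^{n}c+ca^{n}ba^{n})$; expanding $a^{n}\cdot b$ inside the bracket gives $2\{a^{n},(a^{n}\cdot b),c\}=\tfrac12(a^{2n}bc+a^{n}ba^{n}c+ca^{n}ba^{n}+cba^{2n})$, and subtracting $\{a^{2n},b,c\}=\tfrac12(a^{2n}bc+cba^{2n})$ leaves exactly the left side. Identity (\ref{JordanIdentityM:9}) is the same kind of computation: each side opens up into the four monomials $\tfrac14(a^{m}ba^{n+l}+a^{n}ba^{m+l}+a^{m+l}ba^{n}+a^{n+l}ba^{m})$, where the terms produced by the left-hand Jordan multiplication by $a^{l}$ are matched with those produced by the right-hand factor $b\cdot a^{l}$ using $a^{n}a^{l}=a^{l}a^{n}=a^{n+l}$.

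I do not expect a genuine obstacle here; the work is entirely bookkeeping, and the points that require care are tracking the scalar factors $\tfrac12$ and $\tfrac14$ through the nested brackets and keeping the powers of $a$ on the correct side of $b$ (and of $c$). A proof that stays inside the Jordan axioms — using the linearizations of the Jordan identity together with the fact that the free special Jordan algebra on two or three generators is spanned by monomials of the above shape — is possible, but since the statement is expressly about a special Jordan algebra the direct associative verification is the cleanest route, and it is essentially the one behind \cite{JacobsonMacDonald,JacobsonPaige}.
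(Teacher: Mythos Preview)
Your proof is correct: in a special Jordan algebra the ternary operation reduces to $\{x,y,z\}=\tfrac12(xyz+zyx)$ in the ambient associative product, and each of the four identities then follows by the short monomial computations you indicate (the only delicate point, the bookkeeping of the factors $\tfrac12$ and $\tfrac14$ in parts~(\ref{JordanIdentityM:3}) and~(\ref{JordanIdentityM:9}), you have handled correctly).

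There is nothing to compare with in the paper itself: the lemma is quoted from \cite{JacobsonMacDonald,JacobsonPaige} and is not proved here, so your associative verification simply supplies what the paper takes as given. This is also essentially the argument one finds in those references for the special case, so your approach is the expected one.
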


\section{Examples of applicability of the Jordan identities in numerical sequences}\label{Examples_applicability_Jordan identities}
In this section, we give some a few examples on how to apply identities from special Jordan algebras to obtain new identities of order two recurrences relations. 
For example, we show some new and old identities in Fibonacci numbers, generalized Fibonacci numbers, Lucas numbers, Pell numbers, and combinations of some of them.

\subsection{Example} As a first example we show an application of Lemma \ref{JordanIdentitiesM} Part \eqref{JordanIdentityM:1} to  $\mathcal{F}_{1}$ in 
\eqref{FibonacciGenCase1}.  In this example, we use the Jordan identity to prove Identity VI in \cite{Basin} (more general). Thus, we prove that $F_{2n+1}=F_{n+1}^2+F_{n}^2$.  
Letting $a=\mathcal{F}_{1}$ and $b=I_{2}$ (the 2-by-2 identity matrix) in Lemma \ref{JordanIdentitiesM} Part \eqref{JordanIdentityM:1} we obtain that 

\[
\begin{array}{cccc}
	 \left\{a^{m},a^{n},b\right\}&=&a^{(m+n)} \cdot b,\\
	\{\mathcal{F}_{1}^{m},\mathcal{F}_{1}^{n},I_{2}\}&=&\mathcal{F}_{1}^{m+n}\cdot I_2.\\
\end{array}
\]

This and \eqref{FibonacciGenCasePowers1} imply that 
\begin{equation}\label{example1Par1}
\Bigg{ \{ }
      \left[
	\begin{array}{ll}
		F_{m+1} & F_{m} \\
 		F_{m} & F_{m-1}\\
	\end{array}
	\right],
	\left[
	\begin{array}{ll}
		F_{n+1} & F_{n} \\
 		F_{n} & F_{n-1}\\
	\end{array}
	\right],
	\left[
	\begin{array}{ll}
		1 & 0\\
		0 & 1 \\
	\end{array}
	\right]
 \Bigg{ \} }
= \left[
	\begin{array}{ll}
		F_{m+n+1} & F_{m+n} \\
 		F_{m+n} & F_{m+n-1}\\
	\end{array}
	\right].
\end{equation}
Applying \eqref{TernaryOpEquiM} to the left side of this equality  and simplifying we have the identity  

\[
 \left[
\begin{array}{ll}
 F_m F_n+F_{m+1} F_{n+1} &  (F_n L_m+F_m L_n)/2\\
 (F_n L_m+F_m L_n)/2& F_{m-1} F_{n-1}+F_m F_n \\
\end{array}
\right]
= 
\left[
	\begin{array}{ll}
		F_{m+n+1} & F_{m+n} \\
 		F_{m+n} & F_{m+n-1}\\
	\end{array}
\right].
\]
Taking $m=n+1$ and simplifying we obtain the desired identity.  

\subsection{Example} We now  give a second example on the application of Lemma \ref{JordanIdentitiesM} Part \eqref{JordanIdentity:1} to  $\mathcal{F}_{1}$      
in \eqref{FibonacciGenCasePowers1} and $\mathcal{L}$ in \eqref{FiboLucasMatricesM}. Thus, letting $a=\mathcal{F}_{1}$ and $b=\mathcal{L}$ in   
Lemma \ref{JordanIdentitiesM} Part \eqref{JordanIdentityM:1} we obtain that 
\[
\begin{array}{cccc}
	 \left\{a^{n},a^{m},b^{n}\right\}&=&a^{(m+n)} \cdot b^{n},\\
	\{\mathcal{F}_{1}^{n},\mathcal{F}_{1}^{m},\mathcal{L}^{n}\}&=&\mathcal{F}_{1}^{m+n}\cdot \mathcal{L}^{n}.\\
\end{array}
\]

This, \eqref{FibonacciGenCasePowers1}, and \eqref{FiboLucasMatricesM} imply that 
\begin{multline}\label{example2Par2}
\Bigg{ \{ }
      \left[
	\begin{array}{ll}
		F_{n+1} & F_{n} \\
 		F_{n} & F_{n-1}\\
	\end{array}
	\right],
	\left[
	\begin{array}{ll}
		F_{m+1} & F_{m} \\
 		F_{m} & F_{m-1}\\
	\end{array}
	\right],
	\left[
	\begin{array}{ll}
		L_{n}/2 & 5 F_{n}/2 \\
		F_{n}/2 & L_{n}/2 \\
	\end{array}
	\right]
 \Bigg{ \} }=\\
 \left[
	\begin{array}{ll}
		F_{m+n+1} & F_{m+n} \\
 		F_{m+n} & F_{m+n-1}\\
	\end{array}
	\right]\cdot
	\left[
	\begin{array}{ll}
		L_{n}/2 & 5 F_{n}/2 \\
		F_{n}/2 & L_{n}/2 \\
	\end{array}
	\right]
=\\
\left[
\begin{array}{ll}
(L_n F_{m+n+1}+3 F_n F_{m+n} )/2 & (2 L_n F_{m+n}+5 F_n L_{m+n} )/4 \\
  (2 L_n F_{m+n}+F_n L_{m+n} )/4 & (L_n F_{m+n-1}+3 F_n F_{m+n} )/4 \\
\end{array}
\right].
\end{multline}

Applying \eqref{TernaryOpEquiM} and simplifying we have that the left side (top) of this last equation is equal to  
\begin{multline*}
 \left[
\begin{array}{ll}
  L_n (F_m F_n+F_{m+1} F_{n+1} )/2& 
 L_n (L_{m} F_n+F_m L_{n} )/4   \\
L_n (L_m  F_n+F_m L_n  )/4 & 
  L_n (F_{m-1} F_{n-1}+F_m F_n )/2 \\
\end{array}
\right]+\\
 \left[
\begin{array}{ll}
  3 F_n (F_{m-1} F_n+F_m F_{n+1} ) /2 & 
 5 F_n (F_{m-1} F_{n-1}+2 F_m F_n+F_{m+1} F_{n+1} ) /4 \\
  F_n (F_{m-1} F_{n-1}+2 F_m F_n+F_{m+1} F_{n+1} ) /4 & 
   3 F_n (F_m F_{n-1}+F_{m+1} F_n ) /2 \\
\end{array}
\right].
\end{multline*}

Since the entries of the sum of these last matrices are equal to the entries of the right side matrix (bottom) of  \eqref{example2Par2}, 
after doing some simplifications, we obtain these four identities. 

\[L_n F_{m+n+1}+3 F_n F_{m+n}= L_n (F_m F_n+F_{m+1} F_{n+1} )+3 F_n (F_{m-1} F_n+F_m F_{n+1} ).  \]
\[  2 L_n F_{m+n}+5 F_n L_{m+n}=L_n (L_{m} F_n+F_m L_{n} )+5 F_n (F_{m-1} F_{n-1}+2 F_m F_n+F_{m+1} F_{n+1} ).\]
\[ 2 L_n F_{m+n}+F_n L_{m+n}  = L_n (L_m  F_n+F_m L_n  ) +F_n (F_{m-1} F_{n-1}+2 F_m F_n+F_{m+1} F_{n+1} ) . \]
\[L_n F_{m+n-1}+3 F_n F_{m+n}= 2L_n (F_{m-1} F_{n-1}+F_m F_n )+6 F_n (F_m F_{n-1}+F_{m+1} F_n ) .\]

\subsection{Example} In this example, we apply special Jordan identities to Fibonacci polynomials. In this case, we use 
Lemma \ref{JordanIdentitiesM} Part \eqref{JordanIdentityM:3} with \eqref{FiboPolyMatricesT3}. We take $a^n=\mathcal{Q}^n(x)$,  
$b=\mathcal{Q}^m(x)$ and  $c=I_2$. So, 
\[
\begin{array}{rcl}
	 \{ a^{n}, b, a^{n} \} \cdot  c  &=& 2 \{  a^{n},( a^{n} \cdot   b ), c \} - \{ a^{2n}, b, c \}. \\
	  \{ \mathcal{Q}^n(x), \mathcal{Q}^m(x), \mathcal{Q}^n(x) \} \cdot  I_2  
	  &=& 2 \{  \mathcal{Q}^n(x),( \mathcal{Q}^n(x) \cdot   \mathcal{Q}^m(x) ), I_2 \} - \{ \mathcal{Q}^{2n}(x), \mathcal{Q}^m(x), I_2 \}.
\end{array}
\]
These give rise to the following identities. For simplicity of the identities we set $f_t =F_{t}(x)$ and $l_t =L_{t}(x)$ (Lucas polynomial) for every $t>0$. (For more identities in Fibonacci polynomials see \cite{Florez}.)

\[
f_{m-1} f_n^2+f_{n+1} (2 f_m f_n+f_{m+1} f_{n+1})=
f_{m-1} f_n^2+2f_mf_n f_{n+1}+f_{m+1} (f_n^2+2 f_{n+1}^2-f_{2 n+1}).
\]
\begin{multline*} 
f_n (f_{m-1} f_{n-1}+f_{m+1} f_{n+1})+f_m (f_n^2+f_{n-1} f_{n+1})=\\
 f_m (4 f_n^2+l_n^2-l_{2 n})/2+f_n (f_{m-1} f_{n-1}+f_{m+1} f_{n+1}).
\end{multline*}

\subsection{Example} In this example, we apply special Jordan identities combining Fibonacci numbers and Lucas numbers with a matrix having a variable.  
In this case, we use   Lemma \ref{JordanIdentitiesM} Part \eqref{JordanIdentityM:1} with $n=m$ and $a^n=\mathcal{L}^{n}$ and 
$b=\left[\begin{array}{ll}
 x& 1 \\
 1 & 0 \\
\end{array}
\right]$.
\[5 x F_n^2+6 L_n F_n+x L_n^2=6 F_{2 n}+2x L_{2 n},\]
\[5 F_n^2+5 x L_n F_n+L_n^2=5 x F_{2 n}+2 L_{2 n}.\]

\section{Recursive Relations from $2\times 2$ matrices} \label{GeneralCase}
This section is based on the results found by Mc Laughlin \cite{McLaughlin}. We now give a summary of the results from \cite{McLaughlin} that we are going to use here in this paper. 

Let $T:=a+d$ and $D:=ad-bc$ be the trace and the determinant of $A$,  where 
\[
\mathcal{A}=
\left[
	\begin{array}{ll}
		a & b  \\
		c & d\\
	\end{array}
\right].
\]
If $\alpha=(T+\sqrt{T^2-4D})/2$  and $\beta=(T-\sqrt{T^2-4D})/2$, then for $\alpha \ne \beta$, $I_2$ ---the $2\times 2$ identity matrix--- and 
\begin{equation}\label{McLaughlinBinet}
z_n:= \dfrac{\alpha^n-\beta^n}{\alpha-\beta} = \sum _{m=0}^{\lfloor \frac{n-1}{2}\rfloor }\binom{n}{2m+1} T^{n-2m-1} (T^2-4D)^m/2^{n-1},
\end{equation} 
this holds 

\begin{equation}\label{McLaughlinSeq}
\mathcal{A}^n=z_n\mathcal{A}-z_{n-1}D I_2.
\end{equation}

\begin{theorem}[\cite{McLaughlin}]\label{McLaughlinThm} If  
\[y_n=\sum _{i=0}^{\lfloor \frac{n}{2}\rfloor }\binom{n-i}{i} T^{n-2 i} (-D)^i, \]
then 
\[ A^n= \left[
\begin{array}{cc}
 y_n-dy_{n-1} & by_{n-1} \\
 c y_{n-1} & y_{n}-a y_{n-1} \\
\end{array}
\right]. 
\]
\end{theorem}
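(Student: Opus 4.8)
The plan is to establish the formula by induction on $n$, leveraging the Cayley--Hamilton relation for $2\times 2$ matrices together with a three-term recurrence for the scalar sequence $(y_n)$. Since the characteristic polynomial of $\mathcal{A}$ is $\lambda^2 - T\lambda + D$, we have $\mathcal{A}^2 = T\mathcal{A} - D I_2$, and therefore
\[
\mathcal{A}^{n+1} = T\mathcal{A}^n - D\mathcal{A}^{n-1} \qquad (n \ge 1).
\]
So it suffices to show that $(y_n)$ obeys $y_{n+1} = T y_n - D y_{n-1}$ with the right initial data and that the proposed matrix formula is compatible with this recurrence entrywise.

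First I would record the scalar recurrence. Applying Pascal's rule $\binom{n-i}{i} = \binom{n-1-i}{i} + \binom{n-1-i}{i-1}$ inside $y_n = \sum_{i=0}^{\lfloor n/2\rfloor}\binom{n-i}{i}T^{n-2i}(-D)^i$, the terms carrying $\binom{n-1-i}{i}$ reassemble into $T\,y_{n-1}$, while those carrying $\binom{n-1-i}{i-1}$ reassemble, after the shift $i \mapsto i+1$, into $-D\,y_{n-2}$; the floor bounds on the summations create no discrepancy, since the stray binomial coefficients that appear are zero. This yields $y_n = T y_{n-1} - D y_{n-2}$ for $n \ge 2$, and together with $y_0 = 1$, $y_1 = T$, and the convention $y_{-1} = 0$ (the empty sum), this pins down the sequence. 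In passing, this identifies $y_{n-1}$ with the sequence $z_n$ of \eqref{McLaughlinBinet} (same recurrence, same initial values), so the statement is essentially a repackaging of \eqref{McLaughlinSeq}; but the direct induction below is self-contained.

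Next I would carry out the induction. Write $B_n$ for the matrix on the right-hand side of the claim. For $n=0$, since $y_0 = 1$ and $y_{-1} = 0$, we get $B_0 = I_2 = \mathcal{A}^0$; for $n=1$, since $y_1 = T = a+d$ and $y_0 = 1$, the entries of $B_1$ are $T-d = a$, $b$, $c$, $T-a = d$, so $B_1 = \mathcal{A}$. For the inductive step, suppose $\mathcal{A}^n = B_n$ and $\mathcal{A}^{n-1} = B_{n-1}$ for some $n \ge 1$; then $\mathcal{A}^{n+1} = T B_n - D B_{n-1}$ by Cayley--Hamilton. The $(1,1)$ entry of $T B_n - D B_{n-1}$ is
\[
T(y_n - d y_{n-1}) - D(y_{n-1} - d y_{n-2}) = (T y_n - D y_{n-1}) - d(T y_{n-1} - D y_{n-2}) = y_{n+1} - d y_n,
\]
which is the $(1,1)$ entry of $B_{n+1}$; the off-diagonal entries give $b(T y_{n-1} - D y_{n-2}) = b y_n$ and $c(T y_{n-1} - D y_{n-2}) = c y_n$, and the $(2,2)$ entry is handled exactly as the $(1,1)$ entry with $a$ in place of $d$. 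Hence $\mathcal{A}^{n+1} = B_{n+1}$, completing the induction.

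The only genuinely delicate point is the combinatorial identity $y_n = T y_{n-1} - D y_{n-2}$: one must track how $\lfloor n/2\rfloor$ relates to $\lfloor (n-1)/2\rfloor$ and $\lfloor (n-2)/2\rfloor$ in the even and odd cases after Pascal's rule, and confirm that each ``extra'' term produced is a vanishing binomial coefficient. Once that is in hand, the rest is routine matrix bookkeeping driven by Cayley--Hamilton.
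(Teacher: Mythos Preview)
Your argument is correct. The induction via Cayley--Hamilton is the standard route, and each step checks out: the Pascal-rule derivation of $y_n = T y_{n-1} - D y_{n-2}$ is valid (the boundary terms do vanish as you say), the base cases $B_0 = I_2$ and $B_1 = \mathcal{A}$ are right, and the entrywise verification of the inductive step is straightforward. Your side remark that $y_{n-1} = z_n$ and hence that the claim is equivalent to \eqref{McLaughlinSeq} is also correct.

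As for comparison: the paper does not supply its own proof of this theorem. It is quoted as a result of Mc~Laughlin \cite{McLaughlin} and used as a black box, so there is nothing in the paper to compare your argument against. Your self-contained induction is exactly the kind of proof one would expect for this statement.
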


We have observed that if $A:=\{ \{1,1\},\{1,0\}\}$ and 
$$\overline{Z}_n:= \dfrac{\alpha^n+\beta^n}{T} = \sum _{i=0}^{\lfloor \frac{n}{2}\rfloor }\binom{n}{2i} T^{n-2i} (T^2-4D)^i/2^{n-1},$$ 
then the Lucas sequence can be obtained by 
$A^{n-1}B=\overline{Z}_n A-\overline{Z}_{n-1}D I_2$, where 
$B:=A^2+I_2=
\left[
\begin{array}{cc}
 3 & 1 \\
 1 & 2 \\
\end{array}
\right].
$
\subsection{Matrices associated to $k$-th binomial transform of Fibonacci numbers.} \label{GeneralCaseMcLaughlinThm}

We now give some examples of matrices using the technique in Theorem \ref{McLaughlinThm} and \eqref{McLaughlinSeq}. The first entries of the matrices  
$\mathcal{T}^n_{k+1}$ given in   
\eqref{TMatrices} give rise to the $k$-th  binomial transform of  $F_{k+1}$ (see \cite{RugglesIII}).  For the particular case $\mathcal{T}^{n}_{2}$ gives rise to  
$\{F_{2n+1}\}$ and $\{F_{2n}\}$ see  \cite{koshy,Moore}.

In general, $\mathcal{T}^n_{k+1}$ gives rise to the sequences 
$$h_{n,k}(j)=\sum _{i=0}^n (-1)^{i-1+j}  \binom{n}{i} F_{i-j} (k+1)^{n-i}.$$ 
We summarize these results in \eqref{TMatricesPower}.  When $k$ varies for small values the sequences are in \cite{sloane}.  
For example, when $k=2$ we obtain the sequence  $d_{n}=5d_{n-1}-5d_{n-2}$, where  the initial conditions 
depend on the position in the matrix. For example, the sequence associated to the entry $(1,1)$ of 
$\mathcal{T}^{n}_{3}$ is  $d_{n,11}=5d_{n-1}-5d_{n-2}$, where $d_{0}=1$, $d_{1}=3$; 
the sequence associated to the entries $(1,2)$ or $(2,1)$ is $d_{n,12}=5d_{n-1}-5d_{n-2}$, where $d_{0}=1$, $d_{1}=5$;  
and the sequence associated to the entry  $(2,2)$ is $d_{n,22}=5d_{n-1}-5d_{n-2}$, where $d_{0}=2$, and $d_{1}=5$ (see \seqnum{A081567}, \seqnum{A030191}, and \seqnum{A020876}). 

\begin{equation}\label{TMatrices}
\mathcal{T}_{2}:=
\left[
	\begin{array}{ll}
		2 & 1 \\
		1 & 1 \\
	\end{array}
\right]; 
\quad
\hspace{.5cm}
\mathcal{T}_{k+1}:=
\left[
	\begin{array}{ll}
		k+1 & 1 \\
		1 & k \\
	\end{array}
\right].
\end{equation}
The powers of these matrices are

\begin{equation}\label{TMatricesPower}
\mathcal{T}^{n}_{2}=
\left[
	\begin{array}{ll}
		F_{2 n+1} & F_{2 n} \\
		F_{2 n} & F_{2 n-1} \\
	\end{array}
\right]; 
\hspace{.5cm}
\mathcal{T}^n_{k+1}=
\left[
	\begin{array}{ll}
		h_{n,k}(1) & h_{n-1,k}(0)\\
		h_{n-1,k}(0)  & h_{n,k}(-1) \\
	\end{array}
\right].
\end{equation}

\subsection{Other matrices}\label{Combinatorial_McLaughlin} The following matrices can be found in \cite{McLaughlin}.

\begin{equation}\label{Combinatorial_McLaughlin1} 
\mathcal{M}_1 :=
 \left[
	\begin{array}{ll}
		2 & 1 \\
		-1 & 0 \\
	\end{array}
\right]^n
=
 \left[
	\begin{array}{ll}
		n+1 & n  \\
		-n  &  -n+1\\
	\end{array}
\right].
\end{equation}

\begin{equation}\label{Combinatorial_McLaughlin2}  
\mathcal{M}_2 :=
 \left[
	\begin{array}{ll}
		3 & 1 \\
		-2 & 0 \\
	\end{array}
\right]^n
=
 \left[
	\begin{array}{ll}
		2^{n+1}-1 & 2^n-1  \\
		-2^{n+1}+2 & -2^n+2\\
	\end{array}
\right].
\end{equation}

\begin{equation}\label{Combinatorial_McLaughlin3} 
\mathcal{M}_3 :=
 \left[
	\begin{array}{ll}
		-2 & -1 \\
		1 & 1 \\
	\end{array}
\right]^n
=(-1)^{n}
 \left[
	\begin{array}{ll}
		F_{n+2}  &  F_{n}\\
		-F_n  &  -F_{n-2}\\
	\end{array}
\right].
\end{equation}

\subsection{Example} In this example, we apply special Jordan identities to  $k$-th binomial transform of Fibonacci numbers. In this case, we use 
Lemma \ref{JordanIdentitiesM} Part \eqref{JordanIdentityM:9}. We take $a=\mathcal{T}_{k+1}$ from \eqref{TMatricesPower}, $b=I_2$ and   
$c=\mathcal{T}_{k+1}$ from \eqref{TMatricesPower}. 
So, the entries (1,1) of all matrices give 
$$h_{n,k}^{3}(1) =h_{n,k}(0) h_{2n,k}(0) + h_{n,k}(1) h_{2n,k}(1) -  h_{n,k}^{2}(0) (2 h_{n,k}(1) + h_{n,k}(-1)).$$ 
This is equivalent to
\begin{multline*} 
 \left(\sum_{i=0}^{n}(-1)^{i}\binom{n}{i}F_{i-1}(k+1)^{n-i}\right)^{3}=\\
\sum_{i=0}^{n}(-1)^{i-1}\binom{n}{i}F_{i}(k+1)^{n-i}\sum_{i=0}^{2n}(-1)^{i-1}\binom{2n}{i}F_{i}(k+1)^{2n-i} +\\
\sum_{i=0}^{n}(-1)^{i}\binom{n}{i}F_{i-1}(k+1)^{n-i}\sum_{i=0}^{2n}(-1)^{i}\binom{2n}{i}F_{i-1}(k+1)^{2n-i}+\\ 
-  \left(\sum_{i=0}^{n}(-1)^{i-1}\binom{n}{i}F_{i}(k+1)^{n-i}\right)^{2}\left(\sum_{i=0}^{n}(-1)^{i}\binom{n}{i}(2F_{i-1}+F_{i+1})(k+1)^{n-i}\right).
\end{multline*} 

\subsection{Fibonacci-Lucas matrix.} 
The powers of the matrix $\mathcal{L}$ give rise to a matrix where the entries are Lucas numbers and Fibonacci numbers \cite{Johnson}.  

\begin{equation}\label{FiboLucasMatrices} 
\hspace{.5cm}
\mathcal{L}:= (1/2)
\left[
	\begin{array}{ll}
		1 & 5 \\
		1 & 1 \\
	\end{array}
\right];
\hspace{.5cm}
\mathcal{L}^n= (1/2)
\left[
	\begin{array}{ll}
		L_{n} & 5 	F_{n} \\
		F_{n} & L_{n} \\
	\end{array}
\right].
\end{equation}
 
The powers of matrix $\mathcal{S}_k$ give rise to the sequences $\{(k^2+1)^n\}$ and $\{k(k^2+1)^n\}$. 
 Since the matrix $\mathcal{S}_{k}^2$ in \eqref{SMatrices} is diagonalizable, it is easy to see that the matrices 
 $S^{2n}_{k}$ and $S^{2n+1}_{k}$ are correct. 

\begin{equation*}
\mathcal{S}_{k}:=
\left[
	\begin{array}{ll}
		1 & k \\
		k & -1 \\
	\end{array}
\right];
\hspace{1cm}
\mathcal{S}_{k}^{2}=
\left[
\begin{array}{ll}
 k^2+1 & 2 k \\
 2 k & k^2+1 \\
\end{array}
\right].
\end{equation*}

\begin{equation}\label{SMatrices} 
\hspace{.5cm}
\mathcal{S}_{k}^{2n}=
\left[
\begin{array}{ll}
 (k^2+1)^n & 0 \\
 0 & (k^2+1)^n \\
\end{array}
\right];
\hspace{1cm}
\mathcal{S}_{k}^{2n+1}=
\left[
\begin{array}{ll}
 (k^2+1)^n & k(k^2+1)^n \\
 k(k^2+1)^n & (k^2+1)^n \\
\end{array}
\right].
\end{equation}

\subsection{Tribonacci identities} In this section, we give matrices associated to third-order recurrence relations. 
For example, the matrix associated to the tribonacci sequence is denoted by $\mathcal{T}_{0,0,1}$, where the sequence generated by 
the powers of $\mathcal{T}_{0,0,1}$  is given by  $t_{n}=t_{n-1}+t_{n-2}+t_{n-3}$, where $t_{0}=0$, $t_{1}=0$, and $t_{2}=1$ \cite{Basu,Waddill}.   
The sequence generated by the powers of the matrix  $\mathcal{T}_{1,2,1}$ is $s_{n}=s_{n-1}+2s_{n-2}+s_{n-3}$, where  
$s_{0}=0$, $s_{1}=1$, and $s_{2}=1$ \cite{Waddill}.
The sequence generated the powers of the matrix  $\mathcal{T}_{r,s,t}$ is $u_{n}=ru_{n-1}+s u_{n-2}+t u_{n-3}$, where  
$u_{0}=0$, $u_{1}=1$, and $u_{2}=r$ \cite{Waddill}.
For matrices in \eqref{TribunacciMatricesT3} see \cite{koshy}. 

\begin{equation}\label{TribunacciMatricesT1} 
\mathcal{T}_{0,0,1} :=
 \left[
	\begin{array}{lll}
		1 & 1 & 1\\
		1 & 0  & 0\\
		0 & 1  & 0\\
	\end{array}
\right];
\hspace{1cm}
\mathcal{T}^n_{0,0,1} :=
 \left[
	\begin{array}{lll}
		t_{n+2} &t_{n}+t_{n+1} & t_{n+1}\\
		t_{n+1} & t_{n}+t_{n-1}  & t_{n}\\
		t_{n} &t_{n-1}+t_{n-2}  & t_{n-1}\\
	\end{array}
\right].
\end{equation}

\begin{equation}\label{TribunacciMatricesT2} 
\mathcal{T}_{1,2,1} :=
 \left[
	\begin{array}{ccc}
		1 & 2 & 1\\
		1 & 0  & 0\\
		0 & 1  & 0\\
	\end{array}
\right];
\hspace{1cm}
\mathcal{T}^{n}_{1,2,1} :=
 \left[
	\begin{array}{ccc}
		s_{n+1} & 2s_{n}+s_{n-1}& s_{n}\\
		s_{n} &  2s_{n-1}+s_{n-2} & s_{n-1}\\
		s_{n-1} & 2s_{n-2}+s_{n-3}  & s_{n-2}\\
	\end{array}
\right].
\end{equation}

\begin{equation}\label{TribunacciMatricesT4} 
\mathcal{T}_{r,s,t}:=
 \left[
	\begin{array}{ccc}
		r & s & t\\
		1 & 0  & 0\\
		0 & 1  & 0\\
	\end{array}
\right];
\hspace{1cm}
\mathcal{T}^{n}_{r,s,t} :=
 \left[
	\begin{array}{ccc}
		u_{n+1} & s u_{n}+t u_{n-1}& u_{n}\\
		u_{n} &  s u_{n-1}+tu_{n-2} & u_{n-1}\\
		u_{n-1} & su_{n-2}+tu_{n-3}  & u_{n-2}\\
	\end{array}
\right].
\end{equation}

\begin{equation}\label{TribunacciMatricesT3} 
\mathcal{T}_{F} :=
 \left[
	\begin{array}{ccc}
		0 & 0 & 1\\
		0 & 1  & 2\\
		1 & 1  & 1\\
	\end{array}
\right];
\hspace{.5cm}
\mathcal{T}^{n}_{F} :=
 \left[
	\begin{array}{ccc}
		F_{n-1}^{2} & F_{n-1} F_n & F_{n}^{2}\\
		2 F_{n-1}F_n  & F_{n-1}^{2}+F_{n+1} F_n  & 2 F_{n+1} F_n\\
		F_{n}^{2} & F_n F_{n+1}  & F_{n+1}^{2}\\
	\end{array}
\right].
\end{equation}

\subsection{Example} In this case, we use Lemma \ref{JordanIdentitiesM} Part \eqref{JordanIdentityM:1}. We take $a=\mathcal{T}_{0,0,1}$ from 
\eqref{TribunacciMatricesT1} and $b=I_3$.
\begin{enumerate}
\item $t_{m+n+2}=t_{m} t_{n+1}+t_{m+1} (t_{n}+t_{n+1})+t_{m+2} t_{n+2}$.

\item $2 t_{m+n+1}=t_{m-1} t_{n+1}+t_{m+2} t_{n+1}+t_{m} (2 t_{n}+t_{n+1})+t_{m+1} (t_{n-1}+t_{n}+t_{n+2})$.
\end{enumerate}

\section{Identities in Jordan Algebras} \label{JordanAlgebras}
In this section, we give a series of special Jordan algebra  identities from classic literature \cite{JacobsonMacDonald, Jacobson, JacobsonPaige}  (a few identities of many in the literature). 

\subsection{Special Jordan Algebra background} In this section, we complete the identities given in Subsection \ref{MotivationSection}. We recall that  the 
Jordan product is defined as $a\cdot b=(ab+ba)/2$ and that $ \{a,b,c \} $ denotes the ternary operation 
 \begin{equation}\label{TernaryOpEqui} 
 \{a,b,c \} =(1/2)\left[(a b) c+ (c b) a\right].
\end{equation} 

\begin{lemma}[\cite{JacobsonMacDonald,JacobsonPaige}] \label{JordanIdentities} Let  $\mathcal{A}$ be a special Jordan algebra with the ternary       
operation $\{\cdot,\cdot,\cdot\}$.  If $a,b,c \in \mathcal{A}$, where  $a\cdot b$ is the Jordan product, then these identities hold
 
\begin{enumerate}
\item \label{JordanIdentity:1} $\{ a^{n}, a^{m}, b^{n} \}   =  a^{(m+n)} \cdot b^n $. 
\item \label{JordanIdentity:2}  $\{ a^{l} ,\{a^{m},b,a^{m}\},a^{l} \}  = \{  a^{m+l},b,a^{m+l}\} $. 
\item \label{JordanIdentity:3} $\{ a^{n}, b, a^{n} \} \cdot  c  = 2 \{  a^{n},( a^{n} \cdot   b ), c \} - \{ a^{2n}, b, c \} $. 
\item \label{JordanIdentity:4} $2(\{ a^{nm}, b, c \} \cdot  a^{n} )  = \{  a^{n}, \{  a^{(mn-n)}, b, c \}, a^{n} \} + \{  a^{(mn+n)},  b,  c \}  $. 
\item \label{JordanIdentity:6} $2(\{a^{ n},b,a^{n}\} \cdot  a^{ n} )  = \{ a^{ n}, b \cdot a^{ n},  a^{ n} \} + \{ a^{ 2n}, b,a^{ n}  \}  $. 
\item \label{JordanIdentity:9} $\{ a^{m}, b, a^{n} \} \cdot a^{l}  = \{  a^{m},( b\cdot a^{l}),a^{n}\} $. 
\item \label{JordanIdentity:10} $\{ a^{m}, b, a^{m} \} \cdot a^{l}  = \{  a^{m+l},b,a^{m}\} $.   
\item \label{JordanIdentity:12} $\{ a^{l} , \{a^{m},b, a^{n} \} ,c \}= \{a^{(l+m)}, b\cdot a^{n}, c \} + \{ a^{(l+n)}, b\cdot a^{m}, c \} - \{a^{(l+m+n)}, b, c \}$. 
\item \label{JordanIdentity:13} $\{ a^{l} , \{  a^{m} , b,c \}, a^{n} \}  =  \{  a^{(l+m)}, b, c \}\cdot a^{n} + \{ a^{(m+n)},b, c \}\cdot a^{l} - \{  a^{(l+m+n)},  b,  c \}$. 
\end{enumerate}
\end{lemma}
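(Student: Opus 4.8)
The plan is to pass to an associative envelope. Since $\mathcal{A}$ is a \emph{special} Jordan algebra, it may be regarded as a subspace of an associative algebra $\mathcal{U}$ that is closed under $a\cdot b=(ab+ba)/2$, where from now on juxtaposition denotes the associative product of $\mathcal{U}$. Two observations reduce the entire lemma to a formal computation in $\mathcal{U}$. First, an easy induction shows that the Jordan powers $a^{k}$ agree with the associative powers, so $a^{i}a^{j}=a^{i+j}$ and, in particular, all powers of a single element commute. Second, by the defining formula \eqref{TernaryOpEqui},
\[
\{x,y,z\}=\tfrac12\bigl((xy)z+(zy)x\bigr)=\tfrac12\,(xyz+zyx)\qquad\text{in }\mathcal{U},
\]
and likewise $x\cdot y=\tfrac12(xy+yx)$. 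Hence each of the nine expressions in Lemma \ref{JordanIdentities}, once written out in $\mathcal{U}$, is a $\mathbb{Z}[\tfrac12]$-linear combination of associative monomials of the shapes $a^{p}ba^{q}$, $a^{p}ba^{q}ca^{r}$, $a^{p}bca^{q}$ (and their reversals), with exponents that are sums of the given integers. Every identity thereby turns into an identity among such monomials, proved by collecting like terms and using $a^{i}a^{j}=a^{i+j}$.

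I would first run through the short ones to display the mechanism. For \eqref{JordanIdentity:1}, $\{a^{n},a^{m},b^{n}\}=\tfrac12(a^{m+n}b^{n}+b^{n}a^{m+n})=a^{m+n}\cdot b^{n}$. For \eqref{JordanIdentity:2}, since $\{a^{m},b,a^{m}\}=a^{m}ba^{m}$ we get $\{a^{l},a^{m}ba^{m},a^{l}\}=a^{l+m}ba^{l+m}=\{a^{m+l},b,a^{m+l}\}$. For \eqref{JordanIdentity:9} the left side expands to $\tfrac14\bigl(a^{m}ba^{n+l}+a^{n}ba^{m+l}+a^{l+m}ba^{n}+a^{l+n}ba^{m}\bigr)$, and $\{a^{m},b\cdot a^{l},a^{n}\}$ expands to the same four monomials; \eqref{JordanIdentity:10} is then immediate (either directly, or from \eqref{JordanIdentity:9} together with the trivial equality $\{a^{m},b\cdot a^{l},a^{m}\}=\{a^{m+l},b,a^{m}\}$). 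Identities \eqref{JordanIdentity:3} and \eqref{JordanIdentity:6} are the same game with an outside variable: $2\{a^{n},a^{n}\cdot b,c\}=\tfrac12\bigl(a^{2n}bc+a^{n}ba^{n}c+ca^{n}ba^{n}+cba^{2n}\bigr)$, so subtracting $\{a^{2n},b,c\}=\tfrac12(a^{2n}bc+cba^{2n})$ leaves $\tfrac12(a^{n}ba^{n}c+ca^{n}ba^{n})=\{a^{n},b,a^{n}\}\cdot c$, and \eqref{JordanIdentity:6} is the case $c=a^{n}$.

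The three identities \eqref{JordanIdentity:4}, \eqref{JordanIdentity:12}, \eqref{JordanIdentity:13} have a triple product nested inside another, so after substitution each side becomes a short sum of associative monomials of the forms $a^{\alpha}ba^{\beta}ca^{\gamma}$ and $a^{\alpha}bca^{\beta}$ and their reversals, with $\alpha,\beta,\gamma$ running over various sums of $l,m,n$. The ``correction'' term in each identity --- the subtracted $\{a^{l+m+n},b,c\}$ in \eqref{JordanIdentity:12} and \eqref{JordanIdentity:13}, and the added $\{a^{mn+n},b,c\}$ in \eqref{JordanIdentity:4} --- is exactly what reconciles the monomials built from the full exponent sum, after which the remaining monomials agree on both sides once we collapse $a^{i}a^{j}$ to $a^{i+j}$. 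So the entire proof is: substitute \eqref{TernaryOpEqui} and $x\cdot y=(xy+yx)/2$, use $a^{i}a^{j}=a^{i+j}$, and match monomials. The only real care needed --- and the only genuine ``work'' --- is the bookkeeping in \eqref{JordanIdentity:12} and \eqref{JordanIdentity:13}: one must remember that $\{x,y,z\}$ is symmetric in $x$ and $z$ and hence contributes \emph{two} terms, keep track of which of $l,m,n$ ends up on which side of $b$ and of $c$, and account for every monomial together with its reversal with the correct coefficient. (Once \eqref{JordanIdentity:13} is established, \eqref{JordanIdentity:4} drops out of it by taking outer exponents $l=n$ and inner exponent $mn-n$; but the uniform associative expansion proves all nine identities simultaneously.)
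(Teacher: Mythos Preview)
Your argument is correct. Passing to the ambient associative algebra, writing $\{x,y,z\}=\tfrac12(xyz+zyx)$ and $x\cdot y=\tfrac12(xy+yx)$, and then matching associative monomials using $a^{i}a^{j}=a^{i+j}$ verifies each of the nine identities; your side remarks (that \eqref{JordanIdentity:6} is essentially \eqref{JordanIdentity:3} with $c=a^{n}$, and that \eqref{JordanIdentity:4} is the specialization $l=n$, $m\mapsto mn-n$ of \eqref{JordanIdentity:13}) are also correct.

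As for comparison: the paper does not supply its own proof of this lemma. It records the identities with attribution to the classical sources \cite{JacobsonMacDonald,JacobsonPaige} and uses them as input. Your direct associative expansion is exactly the standard way these identities are established for \emph{special} Jordan algebras in those references, so there is no methodological divergence to discuss; you have simply filled in what the paper leaves to citation.
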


\begin{lemma}[\cite{JacobsonPaige,Macdonald}] \label{JordanIdentitiesLemma2}  Let  $\mathcal{A}$ be a special Jordan algebra with the ternary    
operation $\{\cdot,\cdot,\cdot\}$. If $a,b,c \in \mathcal{A}$, where  $a\cdot b$ is the Jordan product, then these identities hold
 
\begin{enumerate}
\item  \label{JordanIdentityL2Part:1} $2\{a^{n},b,c \} \cdot a=\{a,\{a^{n-1},b,c\},a\}+\{a^{n+1},b,c\}$. 
\item  \label{JordanIdentityL2Part:2} $\{a^{n},\{a^{m},b,a^{m}\},c\}=2\{a^{n+m},(a^{m} \cdot b),c\}-\{a^{n+2m},b,c\}$. 
\end{enumerate}
\end{lemma}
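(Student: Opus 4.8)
The plan is to deduce both identities directly from the identities already recorded in Lemma~\ref{JordanIdentities}, and then to cross-check them by an elementary computation in an associative envelope. Because $\mathcal{A}$ is special, one may work inside an associative algebra $A$ with $\mathcal{A}\subseteq A^{+}$; there $\{x,y,z\}=\tfrac12(xyz+zyx)$ and $x\cdot y=\tfrac12(xy+yx)$, power-associativity makes each $a^{k}$ unambiguous, and in particular the contractions $\{a,w,a\}=awa$ and $\{a^{m},b,a^{m}\}=a^{m}ba^{m}$ hold. These are the only structural facts the argument needs.

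For Part~\eqref{JordanIdentityL2Part:1} I would specialize Lemma~\ref{JordanIdentities} Part~\eqref{JordanIdentity:13}, $\{a^{l},\{a^{m},b,c\},a^{n}\}=\{a^{l+m},b,c\}\cdot a^{n}+\{a^{m+n},b,c\}\cdot a^{l}-\{a^{l+m+n},b,c\}$, by taking $l=n=1$. The two terms on the right then coincide, giving $\{a,\{a^{m},b,c\},a\}=2\{a^{m+1},b,c\}\cdot a-\{a^{m+2},b,c\}$; renaming $m+1$ as $n$ and rearranging yields precisely $2\{a^{n},b,c\}\cdot a=\{a,\{a^{n-1},b,c\},a\}+\{a^{n+1},b,c\}$. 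For Part~\eqref{JordanIdentityL2Part:2} I would specialize Lemma~\ref{JordanIdentities} Part~\eqref{JordanIdentity:12}, $\{a^{l},\{a^{m},b,a^{n}\},c\}=\{a^{l+m},b\cdot a^{n},c\}+\{a^{l+n},b\cdot a^{m},c\}-\{a^{l+m+n},b,c\}$, by setting the two inner exponents equal (both $m$) and the outer left exponent equal to $n$; the first two terms on the right then merge, and since $b\cdot a^{m}=a^{m}\cdot b$ by commutativity of the Jordan product this collapses to $\{a^{n},\{a^{m},b,a^{m}\},c\}=2\{a^{n+m},(a^{m}\cdot b),c\}-\{a^{n+2m},b,c\}$.

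As a self-contained alternative I would instead expand both sides in the envelope. For Part~\eqref{JordanIdentityL2Part:1} the left side is $(a^{n}bc+cba^{n})\cdot a=\tfrac12\bigl(a^{n}bca+cba^{n+1}+a^{n+1}bc+acba^{n}\bigr)$, while on the right $\{a,\{a^{n-1},b,c\},a\}=\tfrac12\bigl(a^{n}bca+acba^{n}\bigr)$ and adding $\{a^{n+1},b,c\}=\tfrac12\bigl(a^{n+1}bc+cba^{n+1}\bigr)$ gives the same expression. For Part~\eqref{JordanIdentityL2Part:2} the left side equals $\{a^{n},a^{m}ba^{m},c\}=\tfrac12\bigl(a^{n+m}ba^{m}c+ca^{m}ba^{n+m}\bigr)$, while on the right $2\{a^{n+m},a^{m}\cdot b,c\}=\tfrac12\bigl(a^{n+2m}bc+a^{n+m}ba^{m}c+ca^{m}ba^{n+m}+cba^{n+2m}\bigr)$ and subtracting $\{a^{n+2m},b,c\}=\tfrac12\bigl(a^{n+2m}bc+cba^{n+2m}\bigr)$ leaves the same expression. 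There is no real obstacle here; the only care needed is bookkeeping---keeping the convention $\{x,y,z\}=\tfrac12((xy)z+(zy)x)$ straight (it is harmless in $A$, where $(xy)z=x(yz)$), applying the contractions $\{a,w,a\}=awa$ and $\{a^{m},b,a^{m}\}=a^{m}ba^{m}$ correctly, and tracking which exponent of the parent identity is being specialized.
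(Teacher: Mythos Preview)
Your argument is correct on both counts: the specialization of Lemma~\ref{JordanIdentities} Part~\eqref{JordanIdentity:13} with $l=n=1$ (then relabeling $m+1\mapsto n$) yields Part~\eqref{JordanIdentityL2Part:1}, and the specialization of Part~\eqref{JordanIdentity:12} with the two inner exponents equal and the outer exponent renamed to $n$ yields Part~\eqref{JordanIdentityL2Part:2}; your envelope computations are also clean and track the factors of $\tfrac12$ and the contraction $\{a,w,a\}=awa$ without error.

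As for comparison with the paper: there is nothing to compare against. The paper does not supply a proof of Lemma~\ref{JordanIdentitiesLemma2}; it is simply quoted from \cite{JacobsonPaige,Macdonald}, just as Lemma~\ref{JordanIdentities} is quoted from \cite{JacobsonMacDonald,JacobsonPaige}. Your derivation therefore goes beyond what the paper offers. One small observation worth making explicit in your write-up is that the first approach---reducing Lemma~\ref{JordanIdentitiesLemma2} to Lemma~\ref{JordanIdentities}---shows that Lemma~\ref{JordanIdentitiesLemma2} is not an independent input but a formal consequence of identities the paper has already recorded; in that sense listing it separately (as the paper does) is a convenience rather than new content.
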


\section{Proving classical Fibonacci identities using Jordan identities}\label{ClassicalFibonacciProvedJordanIdentities}

As an example, of the application of the Jordan algebras in numerical sequences, we give different proofs of some classic identities.   
The proofs in this section are obtained applying 
just one of Jordan identitites (Lemma \ref{JordanIdentities} Part \eqref{JordanIdentity:1}). Note it is one of the simpler Jordan identity, so this shows that Jordan identities
are also a great tool  to re-prove classical identities.  For example,  Identity Part \eqref{ClassicIdentitiesPart1} is the Lucas identity \cite{koshy, Vajda}, Identities Parts   
\eqref{ClassicIdentitiesPart2}, \eqref{ClassicIdentitiesPart3}, \eqref{ClassicIdentitiesPart4}, \eqref{ClassicIdentitiesPart7}, are in \cite{Vajda},   
Identities in Parts \eqref{ClassicIdentitiesPart5}, 
\eqref{ClassicIdentitiesPart8} are in \cite{Benjamin},  Identity in Part \eqref{ClassicIdentitiesPart6}  is in \cite{Ferns}, the Identities in   
Parts  \eqref{ClassicIdentitiesPart9} and \eqref{ClassicIdentitiesPart10} are applications of Part \eqref{ClassicIdentitiesPart8}.

\begin{proposition} \label{ClassicIdentities}  For $n\ge 1$, these hold.

\begin{enumerate}
\item \label{ClassicIdentitiesPart1}  $F_{n}^2+F_{n+1}^2 =   F_{2 n+1}$, 
\item \label{ClassicIdentitiesPart2}  $ F_{n} (F_{n-1}+F_{n+1})=  F_{2 n}$, 
\item \label{ClassicIdentitiesPart3}  $5 F_n^2+L_n^2=2L_{2 n}$, 
\item \label{ClassicIdentitiesPart4} $ F_n L_n=F_{2 n}$,  
\item \label{ClassicIdentitiesPart5} $F_m F_n+F_{m+1} F_{n+1}= F_{m+n+1}$,
\item \label{ClassicIdentitiesPart6} $ 5 F_m F_n+L_m L_n=2L_{m+n}$,
\item \label{ClassicIdentitiesPart7} $F_n L_m+F_m L_n=2F_{m+n}$,
\item \label{ClassicIdentitiesPart8} $F_m F_{n-1}+F_{m+1} F_n= F_{m+n}$,
\item \label{ClassicIdentitiesPart9} $F_m F_n+2 F_{m+1} F_{n+1}+F_{m+2} F_{n+2}=F_{m+n+1}+F_{m+n+3}$,
\item \label{ClassicIdentitiesPart10} $F_{n-1}^2+2 F_n^2+F_{n+1}^2=F_{2 n-1}+F_{2 n+1}$.
 \end{enumerate}
 \end{proposition}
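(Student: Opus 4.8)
The plan is to obtain all ten identities from the single Jordan identity in Lemma~\ref{JordanIdentities} Part~\eqref{JordanIdentity:1}, specialized to the two matrices whose powers are known in closed form: the Fibonacci matrix $\mathcal{F}_{1}$ of \eqref{FibonacciGenCasePowers1} and the Fibonacci--Lucas matrix $\mathcal{L}$ of \eqref{FiboLucasMatrices}. In both cases I take the third slot to be $b=I_{2}$, so that $b^{n}=I_{2}$ and the ternary operation \eqref{TernaryOpEqui} collapses to the Jordan product $\{a^{m},a^{n},I_{2}\}=\tfrac12(a^{m}a^{n}+a^{n}a^{m})$; the identity then asserts that this equals $a^{m+n}$. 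The proof consists of writing this matrix equality out entry by entry using the closed forms for the powers, and then choosing suitable values of the free parameters $m,n$. Throughout I use the elementary relation $L_{k}=F_{k-1}+F_{k+1}$ to recognize the Lucas entries that arise.

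First I would run the computation with $a=\mathcal{F}_{1}$. Multiplying out the $2\times2$ matrices $\mathcal{F}_{1}^{m}$ and $\mathcal{F}_{1}^{n}$ from \eqref{FibonacciGenCasePowers1}, symmetrizing, and simplifying with $L_{k}=F_{k-1}+F_{k+1}$, one arrives exactly at the matrix equality displayed in the first Example of Section~\ref{Examples_applicability_Jordan identities}, whose left side has $(1,1)$ entry $F_{m}F_{n}+F_{m+1}F_{n+1}$, off-diagonal entry $(F_{n}L_{m}+F_{m}L_{n})/2$, and $(2,2)$ entry $F_{m-1}F_{n-1}+F_{m}F_{n}$, and whose right side is $\mathcal{F}_{1}^{m+n}$. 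Reading off the $(1,1)$ entry gives Part~\eqref{ClassicIdentitiesPart5}; the off-diagonal entry gives Part~\eqref{ClassicIdentitiesPart7}; and the $(2,2)$ entry, after the substitution $m\mapsto m+1$, gives Part~\eqref{ClassicIdentitiesPart8}. Setting $m=n$ in Part~\eqref{ClassicIdentitiesPart5} yields Part~\eqref{ClassicIdentitiesPart1}; setting $m=n$ in Part~\eqref{ClassicIdentitiesPart7} and cancelling a factor $2$ yields $F_{n}L_{n}=F_{2n}$, which is Part~\eqref{ClassicIdentitiesPart4}, and rewriting $L_{n}=F_{n-1}+F_{n+1}$ in it gives Part~\eqref{ClassicIdentitiesPart2}.

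Next I would repeat the computation with $a=\mathcal{L}$, using \eqref{FiboLucasMatrices}: the product $\mathcal{L}^{m}\mathcal{L}^{n}$ has $(1,1)$ entry $\tfrac14(L_{m}L_{n}+5F_{m}F_{n})$ and $(1,2)$ entry $\tfrac54(L_{m}F_{n}+F_{m}L_{n})$, while $\mathcal{L}^{m+n}$ has $(1,1)$ entry $\tfrac12 L_{m+n}$ and $(1,2)$ entry $\tfrac52 F_{m+n}$; equating them gives Part~\eqref{ClassicIdentitiesPart6} (and, once more, Part~\eqref{ClassicIdentitiesPart7}), and the case $m=n$ of Part~\eqref{ClassicIdentitiesPart6} is Part~\eqref{ClassicIdentitiesPart3}. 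The two remaining identities are purely formal consequences of those already proved: Part~\eqref{ClassicIdentitiesPart9} is obtained by adding Part~\eqref{ClassicIdentitiesPart5} to its image under $m\mapsto m+1$, $n\mapsto n+1$, and Part~\eqref{ClassicIdentitiesPart10} is obtained by adding Part~\eqref{ClassicIdentitiesPart1} at $n-1$ to Part~\eqref{ClassicIdentitiesPart1} at $n$ (equivalently, it is the $m=n-1$ case of Part~\eqref{ClassicIdentitiesPart9}).

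There is no conceptual obstacle in this argument; the whole proof is a bookkeeping exercise once the closed forms \eqref{FibonacciGenCasePowers1}, \eqref{FiboLucasMatrices} and the ternary operation \eqref{TernaryOpEqui} are in hand. The step requiring the most care is the entrywise expansion itself: one must multiply the $2\times2$ matrices symbolically, correctly combine the two symmetric summands of $\{\cdot,\cdot,\cdot\}$, and recognize $F_{k-1}+F_{k+1}$ as $L_{k}$; and one must then match each of the ten displayed forms exactly, which requires the small index shifts (such as $m\mapsto m+1$ for Part~\eqref{ClassicIdentitiesPart8}) and the specializations $m=n$ and $n\mapsto n-1$ used above. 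Finally one should note that the proposition claims these identities only for $n\ge 1$, so all subscripts $m-1$, $n-1$ appearing along the way stay in the range where \eqref{FibonacciGenCasePowers1} is valid and no negative-index conventions are needed.
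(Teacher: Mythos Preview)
Your argument is correct. Both you and the paper derive everything from Lemma~\ref{JordanIdentities}~\eqref{JordanIdentity:1} together with the closed forms for $\mathcal{F}_{1}^{k}$ and $\mathcal{L}^{k}$, but the bookkeeping is organized differently. You keep $b=I_{2}$ throughout, read off all three entries of the single equality $\mathcal{F}_{1}^{m}\mathcal{F}_{1}^{n}=\mathcal{F}_{1}^{m+n}$ (and likewise of $\mathcal{L}^{m}\mathcal{L}^{n}=\mathcal{L}^{m+n}$), and then recover Parts~\eqref{ClassicIdentitiesPart9} and~\eqref{ClassicIdentitiesPart10} as sums of earlier items. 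The paper instead runs a separate instance of the lemma for almost every part, varying $b$ over $I_{2}$ and the elementary matrices $E_{11}$, $E_{12}$, $E_{22}$, $E_{12}+E_{21}$ so that each choice isolates one scalar identity directly (and it obtains Part~\eqref{ClassicIdentitiesPart7} from $a=\mathcal{L}$ rather than from the off-diagonal of the $\mathcal{F}_{1}$ computation as you do). Your route is more economical---two matrix computations in place of seven---and makes the logical dependencies among the ten identities visible; the paper's route ties each identity to its own application of the Jordan lemma, which matches the expository purpose of the section.

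One small slip in your write-up: Part~\eqref{ClassicIdentitiesPart10} is not literally the ``$m=n-1$ case'' of Part~\eqref{ClassicIdentitiesPart9}; substituting only $m=n-1$ into \eqref{ClassicIdentitiesPart9} yields $F_{n-1}F_{n}+2F_{n}F_{n+1}+F_{n+1}F_{n+2}=F_{2n}+F_{2n+2}$. What works is setting \emph{both} indices of \eqref{ClassicIdentitiesPart9} equal to $n-1$, which is indeed equivalent to your primary derivation of adding \eqref{ClassicIdentitiesPart1} at $n-1$ and at $n$.
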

 
 \begin{proof} The proofs of all parts of this proposition follow from Lemma \ref{JordanIdentities} Part \eqref{JordanIdentity:1}. Therefore, here  
 we indicate the matrices used for 
 $a^n$, $a^m$, and $b$. For the  proof of Parts \eqref{ClassicIdentitiesPart1} and \eqref{ClassicIdentitiesPart2}, we use $a^n=\mathcal{F}^n_1$, 
 $a^m=\mathcal{F}^n_1$ from \eqref{FibonacciGenCasePowers1} and 
 $b=
 \left[\begin{array}{ll}
 1 & 0 \\
 0 & 1 \\
\end{array}
\right]$. 

The proof of Parts \eqref{ClassicIdentitiesPart3} and \eqref{ClassicIdentitiesPart4}, uses $n=m$, 
$a^n=\mathcal{L}^n$ from   \eqref{FiboLucasMatrices} with  $b=\left[\begin{array}{ll}
 1 & 0 \\
 0 & 1 \\
\end{array}
\right]$. 

 The proof of Part \eqref{ClassicIdentitiesPart5}, uses $a^n=\mathcal{F}^n_1$, $a^m=\mathcal{F}^m_1$ from    \eqref{FibonacciGenCasePowers1} with   $b=\left[\begin{array}{ll}
 1 & 0 \\
 0 & 0 \\
\end{array}
\right]$.  
 
 The proof of Parts \eqref{ClassicIdentitiesPart6} and  \eqref{ClassicIdentitiesPart7}, uses $a^n=\mathcal{L}^n$, $a^m=\mathcal{L}^m$ from    \eqref{FiboLucasMatrices}   
 with  
 $b=\left[\begin{array}{ll}
 0& 1 \\
 0 & 0 \\
\end{array}
\right]$. 

 The proof of Part \eqref{ClassicIdentitiesPart8}, uses $a^n=\mathcal{F}^{n}_1$, $a^m=\mathcal{F}^{m}_1$ from    \eqref{FibonacciGenCasePowers1} with   $b=\left[\begin{array}{ll}
 0& 0 \\
 0 & 1 \\
\end{array}
\right]$.  

 The proof of Part \eqref{ClassicIdentitiesPart9}, uses $a^n=\mathcal{F}^{n+1}_1$, $a^m=\mathcal{F}^{m+1}_1$ from    \eqref{FibonacciGenCasePowers1} with   $b=\left[\begin{array}{ll}
 0& 1 \\
 0 & 0 \\
\end{array}
\right]$.  

The proof of Part \eqref{ClassicIdentitiesPart10}, uses $n=m$, $a^n=\mathcal{F}^{n}_1$ from    \eqref{FibonacciGenCasePowers1} with   $b=\left[\begin{array}{ll}
0& 1 \\
 1 & 0 \\
\end{array}
\right]$. 
 \end{proof}
 
 \section{Recursive relations identities from Jordan identities}\label{Recursive_Identities_from_Jordan_Identities}
 
Using the mentioned matrices in Sections \ref{Fibonacci_Identities_from_Jordan_Identities} and \ref{GeneralCase}, and the identities in
Section \ref{JordanAlgebras}, we connect both areas of   mathematics, special Jordan algebras and recurrence relations. Here we give a collection  
of identities of Fibonacci numbers, Lucas numbers, Pell numbers, and the binomial transform. This list is not complete, these identities are actually   
a few examples of many that  the reader can find using this technique. Since the main objective of this paper is to show the path between special   
Jordan algebras and the recurrences relations, the identities are simplified but not too deep. 

\subsection{Fibonacci and other identities from Jordan identities}\label{Fibonacci_Identities_from_Jordan_IdentitiesExam}

The proofs of the following theorems are straightforward applications of the identities given in Lemmas \ref{JordanIdentities}  and  \ref{JordanIdentitiesLemma2}.
and the matrices that are given in Sections \ref{Fibonacci_Identities_from_Jordan_Identities}. The proofs are made following the technique  
used in Section \ref{Examples_applicability_Jordan identities}.

 \begin{proposition}\label{SantiagoTheorem7} If $F_{n}$ is a Fibonacci number and $L_{n}$ is a Lucas number, then these identities hold
\begin{enumerate}
\item  \label{FibonaIdentitySat:OSC1}
$F_{m} (F_{2 n} F_{n+1}+F_{n} F_{2n+1})= F_{2n+1}(  F_{m+n+1} -F_{m+1} F_{n+1}) - F_{2 n}( F_{m-1} F_{n}- F_{m+n}), $
  
\item  \label{FibonaIdentitySat:OSC2}  
$5 F_{n}^2= F_{n+2}( 2F_{n+3}  -3 F_{n-1})-F_{n+1}(  6 F_{n}+ F_{n+1})$,
 
\item  \label{FibonaIdentitySat:OSC3}  
$ F_{n+1} ( 2 F_{n+2} -F_{n+1})  =F_{n}F_{n+3}    +F_{n-1}F_{n+2}$,

\item  \label{FibonaIdentitySat:OSC4}   
$5 (F_{n+1}^2+F_{n}^2)=4 F_{2n}+5 F_{2n+1}-4F_{n}(F_{n-1} + F_{n+1})$,

\item  \label{FibonaIdentitySat:OSC5}  
$ 11F_{n+1}^2=  13 F_{2n}+6 F_{2n-1}+11 F_{2n+1} -6F_{n-1}^2 -17F_{n}^2-13F_{n}(F_{n-1}+ F_{n+1})$,
 
\item  \label{FibonaIdentitySat:OSC6}  
$ 3 F_{n+1}^2=  5 F_{2n}+2 F_{2n-1}+3 F_{2n+1}-2F_{n-1}^2 -5F_{n}(F_{n-1}+ F_{n+2})$,

\item  \label{FibonaIdentitySat:OSC7}
\begin{multline*}    
F_{n-1} \left(F_{2n}^2-F_{n} F_{2n} F_{n+1}+F_{2n-1}^2-F_{n}^2 (F_{2n-1}+F_{2n+1})\right)   =  \\  2 F_{n-1}^2 F_{n} F_{2n}+F_{n-1}^3 F_{2n-1}  +F_{n}
\left(F_{n}^2 F_{2n}+F_{n} F_{n+1} F_{2n+1}-F_{2n} (F_{2n-1}+F_{2n+1})\right),
\end{multline*} 

\item  \label{FibonaIdentitySat:OSC8}  
$F_{r}((F_{n-1}-F_{n+1})F_{m+2}+(F_{m-1}-F_{m+1})F_{n+2})=((2F_{n}+F_{n+1})F_{m}+F_{m+1}F_{n})(F_{r-1}-F_{r+1})$,

\item  \label{FibonaIdentitySat:OSC9} 
\begin{multline*} 
F_{m-1} ((3F_{n+1}-2 F_{n-1}) F_{r}+F_{ n} (F_{r-1}-F_{r+1}))=\\
F_{m+1} (-3F_{n-1} F_{r}+4 F_{n+1} F_{r}+2 F_{ n} (F_{r-1}-F_{r+1}))-F_{m} (F_{n-1}-2 F_{n+1}) (F_{r-1}-F_{r+1}).
\end{multline*} 

\end{enumerate}
\end{proposition}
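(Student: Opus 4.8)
The plan is to treat all nine identities uniformly, following verbatim the machine developed in Section \ref{Examples_applicability_Jordan identities}: choose a matrix $\mathcal{A}$ from Section \ref{Fibonacci_Identities_from_Jordan_Identities} (here always $\mathcal{F}_1$ from \eqref{FibonacciGenCase1}, together with its known powers \eqref{FibonacciGenCasePowers1}), select an auxiliary matrix $b$ (typically one of $I_2$ or a single-entry matrix such as $\left[\begin{smallmatrix}0&1\\0&0\end{smallmatrix}\right]$, or in some parts a power $\mathcal{F}_1^{k}$), substitute into one of the Jordan identities of Lemma \ref{JordanIdentities} or Lemma \ref{JordanIdentitiesLemma2}, expand both sides using \eqref{TernaryOpEqui} and the matrix product, and then read off a scalar identity from one chosen entry (usually the $(1,1)$ or $(1,2)$ entry). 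Each part of the Proposition is thus reduced to: (i) naming the Jordan identity used, (ii) naming the matrices used for the slots $a^n,a^m,a^l,b,c$, and (iii) recording the resulting entry equation, exactly as was done for Proposition \ref{ClassicIdentities}.

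Concretely, I expect the assignments to be roughly as follows. For Part \eqref{FibonaIdentitySat:OSC1}, which mixes $F_m$, $F_n$, and $F_{2n}$, the natural tool is Lemma \ref{JordanIdentities} Part \eqref{JordanIdentity:3} or Part \eqref{JordanIdentity:9} with $a^n=\mathcal{F}_1^{n}$, $b=\mathcal{F}_1^{m}$, $c=I_2$; the doubled indices $F_{2n}$, $F_{2n\pm1}$ come from the $a^{2n}$ term. For Parts \eqref{FibonaIdentitySat:OSC2}--\eqref{FibonaIdentitySat:OSC6}, which involve only a single running index $n$ (with shifts like $F_{n+1}$, $F_{n+2}$, $F_{2n}$, $F_{2n\pm1}$), I would use the $n=m$ specializations of Parts \eqref{JordanIdentity:3}, \eqref{JordanIdentity:6}, or \eqref{JordanIdentity:13}, or Lemma \ref{JordanIdentitiesLemma2} Part \eqref{JordanIdentityL2Part:1}, with $a=\mathcal{F}_1$, various powers $a^{n}$ or $a^{n+1}$, and $b$ a single-entry matrix; the different integer coefficients ($5$, $11$, $3$, $6$, $13$, $17$) will be produced by the numerical coefficients appearing when one clears the $1/2$'s and $1/4$'s in \eqref{TernaryOpEqui} and combines the $\mathcal{F}_1^{k}$-entries via the Fibonacci recurrence. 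For the three-index identities \eqref{FibonaIdentitySat:OSC7}, \eqref{FibonaIdentitySat:OSC8}, \eqref{FibonaIdentitySat:OSC9}, which carry $F_r$, $F_m$, $F_n$ simultaneously, I would use Part \eqref{JordanIdentity:12} or Part \eqref{JordanIdentity:13} with $a^l=\mathcal{F}_1^{r}$, $\{a^m,b,a^n\}$ built from $\mathcal{F}_1^{m}$ and $\mathcal{F}_1^{n}$, and $c=I_2$, again reading the $(1,1)$ entry; the recurring factor $(F_{r-1}-F_{r+1})=-F_r$ and $(F_{n-1}-F_{n+1})=-F_n$ are just the trace-type combinations of the entries of $\mathcal{F}_1^{r}$ and $\mathcal{F}_1^{n}$, so the stated forms are the un-simplified entry equations.

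The main obstacle is purely bookkeeping: for each part one must verify that the chosen $b$ and the chosen Jordan identity really yield exactly the printed scalar identity after expansion, and in particular that the correct entry is selected and that the $1/2$-scalings are cleared consistently. There is no conceptual difficulty once the correct slot assignments are found, since every Jordan identity in Lemma \ref{JordanIdentities} and Lemma \ref{JordanIdentitiesLemma2} holds in the special Jordan algebra of $2\times 2$ real matrices by hypothesis, and the matrix powers are the tabulated ones in \eqref{FibonacciGenCasePowers1}; the only risk is a mismatch between a given part and the identity/matrix pair I guessed above, in which case one would try a neighboring identity (e.g. Part \eqref{JordanIdentity:13} in place of Part \eqref{JordanIdentity:12}, or Lemma \ref{JordanIdentitiesLemma2} Part \eqref{JordanIdentityL2Part:2}) and/or a shifted power $\mathcal{F}_1^{n+1}$ in place of $\mathcal{F}_1^{n}$. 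So the proof reduces to presenting, for each of the nine parts, the triple (Jordan identity, matrix assignment, chosen entry), exactly in the style of the proof of Proposition \ref{ClassicIdentities}, and remarking that the displayed equation is precisely that entry's equation before simplification.
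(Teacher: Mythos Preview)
Your overall plan---pick a Jordan identity from Lemma~\ref{JordanIdentities}, substitute Fibonacci-type matrices into the slots, expand both sides via \eqref{TernaryOpEqui}, and read off an entry---is exactly the paper's method, so at the level of strategy you are aligned with the paper.

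Where you diverge is in the choice of auxiliary matrices and in the identity-to-part assignment. The paper does \emph{not} use $b=I_2$ or single-entry matrices here; it fixes once and for all $a=\mathcal{F}_1$, $b=\mathcal{T}_2$ from \eqref{TMatrices}--\eqref{TMatricesPower}, and $c=\mathcal{L}$ from \eqref{FiboLucasMatrices}, and then runs through Parts~\eqref{JordanIdentity:1}--\eqref{JordanIdentity:9} of Lemma~\ref{JordanIdentities}: Part~\eqref{FibonaIdentitySat:OSC1} comes from \eqref{JordanIdentity:1}; Parts~\eqref{FibonaIdentitySat:OSC2}--\eqref{FibonaIdentitySat:OSC3} from \eqref{JordanIdentity:2}; Parts~\eqref{FibonaIdentitySat:OSC4}--\eqref{FibonaIdentitySat:OSC6} from \eqref{JordanIdentity:3}; Part~\eqref{FibonaIdentitySat:OSC7} from \eqref{JordanIdentity:6}; and Parts~\eqref{FibonaIdentitySat:OSC8}--\eqref{FibonaIdentitySat:OSC9} from \eqref{JordanIdentity:9}. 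The point of using $\mathcal{T}_2$ is that its $n$th power already has entries $F_{2n\pm1}$ and $F_{2n}$, so the doubled indices in Part~\eqref{FibonaIdentitySat:OSC1} arise immediately from the simplest identity $\{a^n,a^m,b^n\}=a^{m+n}\cdot b^n$, rather than from an $a^{2n}$ term in a more elaborate identity as you propose. Likewise the presence of $\mathcal{L}$ in the $c$-slot is what feeds the Lucas-flavoured coefficients into Parts~\eqref{FibonaIdentitySat:OSC4}--\eqref{FibonaIdentitySat:OSC6}.

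Your route---manufacturing $F_{2n}$ from $\mathcal{F}_1^{2n}$ via identities \eqref{JordanIdentity:3}, \eqref{JordanIdentity:12}, \eqref{JordanIdentity:13}---is not wrong in principle, but each of your tentative assignments would have to be checked separately, and several of them (e.g.\ getting Part~\eqref{FibonaIdentitySat:OSC1} from \eqref{JordanIdentity:3} with $c=I_2$) will not produce the printed identity on the nose. The paper's choice of $b=\mathcal{T}_2$, $c=\mathcal{L}$ is the missing ingredient that makes the bookkeeping uniform and the identity-to-part correspondence clean.
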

 
 \begin{proof}  This proof is a straightforward application of Lemma \ref{JordanIdentities}. In this lemma we use Parts \eqref{JordanIdentity:1}--\eqref{JordanIdentity:9} setting $a=  \mathcal{F}_{1}$, from \eqref{FibonacciGenCase1} and \eqref{FibonacciGenCasePowers1}, $b=  \mathcal{T}_{2}$ from \eqref{TMatrices} and  \eqref {TMatricesPower}
 and $c= \mathcal{L}$ from \eqref {FiboLucasMatrices}.
 
The Proof of Part \eqref{FibonaIdentitySat:OSC1} uses Lemma \ref{JordanIdentities} Part \eqref{JordanIdentity:1}.

The Proofs of Parts \eqref{FibonaIdentitySat:OSC2} and \eqref{FibonaIdentitySat:OSC3} use Lemma \ref{JordanIdentities} Part \eqref{JordanIdentity:2}.

The Proofs of Parts \eqref{FibonaIdentitySat:OSC4}--\eqref{FibonaIdentitySat:OSC6}  use Lemma \ref{JordanIdentities} Part \eqref{JordanIdentity:3}.

The Proof of Part \eqref{FibonaIdentitySat:OSC7}  uses Lemma \ref{JordanIdentities} Part \eqref{JordanIdentity:6}.

The Proofs of Parts \eqref{FibonaIdentitySat:OSC8} and \eqref{FibonaIdentitySat:OSC9}  use Lemma \ref{JordanIdentities} Part \eqref{JordanIdentity:9}.
 \end{proof} 
  
\begin{proposition}\label{SantiagoTheorem7} If $F_{n}$ is a Fibonacci number and $L_{n}$ is a Lucas number, then these identities hold

\begin{enumerate}
\item  \label{FibonaIdentitySat:1}  $F_{n+2}^2-F_{n}^2=F_{2 n+2}$,

\item  \label{FibonaIdentitySat:3}  $F_{n-2}^2+2 F_n^2+2 F_{n+3}^2+8 F_n F_{n+2}  =$ $ 8 F_{n+2}^2+4 F_n F_{n-2}+F_{n+1}^2$, 

\item  \label{FibonaIdentitySat:4}  $F_{n-2}^2+F_{2n+1}+6F_{n}F_{n+2}+2F_{n+1}F_{n+3}=$ $ F_{n-1}L_{n+2}+4F_{n+2}^2+F_{n-2}(3F_{n}+F_{n+2})$,

\item  \label{FibonaIdentitySat:5}  $F_{n-2}^2+2F_{n}L_{n}+2F_{n}F_{n+2}+2F_{n+1}^2=F_{n-1}^2+F_{n}^2+2F_{n+2}^2$,

\item  \label{FibonaIdentitySat:6}   
\begin{multline*} 
3F_n^3+2 F_n^2 \left(F_{n+2}+L_n\right)=F_{n-2} F_n \left(2 F_n+3 L_n\right)+\\
F_n \left(-3 F_{n+2} L_n+2 F_{n+2}^2+2 F_{2 n}+F_{2 n-2}-2 F_{2 n+2}\right) \\
+\left(F_{n+2}^2+3 F_{2 n}-L_{2n}\right) L_n+F_{n-2}^2 \left(F_n+L_n\right),
\end{multline*} 

\item  \label{FibonaIdentitySat:7}  
\begin{multline*} 
5F_n \left(2 F_{n^2} F_{n+2}-\left(F_{n^2-2}-2 F_{n^2+2}\right) F_{n+2}+F_{n^2+n-2}-2 F_{n^2+n}-2 F_{n^2+n+2}\right)=\\
F_n^2L_n\left(-F_{n^2-n-2}+F_{n^2-n+2}-3 F_{(n-1) n}\right) +\\
2F_nL_n \left(F_{n^2-2}-2 F_{n^2+2}+F_{n-2} F_{n^2-n-2}+2 F_{n+2} F_{n^2-n+2}\right)\\
+ L_nF_{n-2} \left(3 F_{n^2}-F_{n^2-2}+F_{n^2+2}+F_{n+2} F_{n^2-n-2}-F_{n+2} F_{n^2-n+2}-3 F_{(n-1) n} F_{n+2}\right) \\
+L_n\left(-3 F_{n^2} F_{n+2}+F_{n+2} F_{n^2-2}-F_{n+2} F_{n^2+2}-F_{n^2+n-2}+3 F_{n^2+n}+F_{n^2+n+2}\right) + \\
2 F_n^2 \left(6 F_{n^2}+3 \left(F_{n^2-2}-F_{n^2+2}+F_{n+2} F_{n^2-n+2}\right)-2 F_{(n-1) n} F_{n+2}\right)+\\
-F_n^3 \left(F_{n^2-n-2}-2 F_{n^2-n+2}+2 F_{(n-1) n}\right)+ 
 F_{n-2}F_n^2\left(6 F_{n^2-n-2}+8 F_{(n-1) n}\right)+\\
 5F_n F_{n-2} \left(2 F_{n^2}-F_{n^2-2}+2 F_{n^2+2}+F_{n+2} F_{n^2-n-2}-2 F_{n+2} F_{n^2-n+2}-2 F_{(n-1) n} F_{n+2}\right),
\end{multline*} 

\item  \label{FibonaIdentitySat:8} 
\begin{multline*}  
F_n\left(F_n \left(6 F_{n^2}-2 F_{n^2-2}+4 F_{n^2+2}\right)+F_n^2 \left(2 F_{(n-1) n}-3 F_{n^2-n+2}\right)-3 F_{n^2+n-2}-4 F_{n^2+n}\right)=\\
L_n\left(F_{n^2-n-2} F_{n-2}^2+\left(2 F_{n^2-2}-3 F_n F_{(n-1) n}\right) F_{n-2}-3 F_n F_{n^2}+2 F_n^2 F_{n^2-n+2}+F_{n^2+n-2}\right) +\\
 F_n F_{n-2}\left(8 F_{n^2}+6 F_{n^2-2}+F_n \left(2 F_{n^2-n-2}-4 F_{n^2-n+2}-6 F_{(n-1) n}\right)\right)+\\
  F_n F_{n-2}^2\left(3 F_{n^2-n-2}+4 F_{(n-1) n}\right),
\end{multline*} 

\item  \label{FibonaIdentitySat:9}  $F_{n-2} F_{2 n-1}F_n^2+F_{2 n} F_{2 n-1} F_n+F_{n+2} F_{2 n+1} \left(F_{n+2}^2-F_{2 n+2}\right)=$ $F_n^2F_{n+2} \left(F_{2 n-1}+F_{2 n+1}\right) $,

\item  \label{FibonaIdentitySat:10}  
\begin{multline*}  
F_{2 n} F_{n+2}F_n^2+\left(-2 F_{2 n}^2+F_{2 n-2} F_{2 n-1}+F_{2 n+1} \left(2 F_{n+2}^2-F_{2 n+2}\right)\right) F_n+\\
F_{n-2}^2 \left(F_{2 n} F_{n+2}+2 F_n F_{2 n-1}\right)+F_{2 n} F_{n+2} \left(F_{2 n-2}-F_{2 n+1}\right)=\\ 
F_{n-2} \left(F_{2 n} F_n^2+F_{n+2} \left(F_{2 n-1}+F_{2 n+1}\right) F_n+F_{2 n} \left(F_{n+2}^2-F_{2 n-1}-F_{2 n+2}\right)\right)+
\left(F_{2 n-1}+F_{2 n+1}\right) F_n^3,
\end{multline*}

\item  \label{FibonaIdentitySat:11}  $F_{2 n-1} F_{n-2}^3+F_n \left(F_n F_{n+2}-F_{2 n}\right) F_{2 n+1}=$ $F_{n-2}\left(F_n^2 \left(F_{2 n-1}+F_{2 n+1}\right)-F_{2 n-2} F_{2 n-1}\right) $,

\item  \label{FibonaIdentitySat:12}  $F_n\left(F_{l-2}+F_{l+2}\right) =F_l \left(F_{n-2}+F_{n+2}\right)$,

\item  \label{FibonaIdentitySat:13} 
\begin{multline*}
F_{2 n+1} F_{l+n+2}^2-F_{2 n-1} F_{l+n}^2=
F_l^2 \left(F_n^2 F_{2 n+1}-F_{n-2}^2 F_{2 n-1}\right)+\\
2 F_{l+2} F_l F_n \left(F_{n-2} F_{2 n-1}-F_{n+2} F_{2 n+1}\right)+F_{l+2}^2 \left(F_{n+2}^2 F_{2 n+1}-F_n^2 F_{2 n-1}\right),
\end{multline*}

\item  \label{FibonaIdentitySat:14} 
\begin{multline*}
F_{2 n+1} F_{l+n} F_{l+n+2}+F_{2 n} F_{l+n}^2=
F_l^2  (F_{2 n} F_n^2+ (F_{n-2} F_{2 n-1}-F_{n+2} F_{2 n+1} ) F_n-F_{n-2} F_{2 n} F_{n+2} )+ \\
F_l  (F_{l-2}  (F_n^2 F_{2 n+1}-F_{n-2}^2 F_{2 n-1} )+F_{l+2}  (F_{n+2}^2 F_{2 n+1}-F_n^2 F_{2 n-1} ) )+\\
F_{2 n} F_{l+n-2} F_{l+n+2}+F_{2 n-1} F_{l+n-2} F_{l+n} +\\
F_{l-2} F_{l+2}  (F_{n-2} F_{2 n-1} F_n-F_{n+2} F_{2 n+1} F_n-F_n^2 F_{2 n}+F_{n-2} F_{2 n} F_{n+2} ),
\end{multline*}

\item  \label{FibonaIdentitySat:15}  
\begin{multline*}
F_{2 n-1} F_{l+n-2}^2=
F_{l-2}^2 \left(F_{n-2}^2 F_{2 n-1}-F_n^2 F_{2 n+1}\right)+\\
2 F_l F_{l-2} F_n \left(F_{n+2} F_{2 n+1}-F_{n-2} F_{2 n-1}\right)+F_{2 n+1} F_{l+n}^2+F_l^2 \left(F_n^2 F_{2 n-1}-F_{n+2}^2 F_{2 n+1}\right),
\end{multline*}

\item  \label{FibonaIdentitySat:16}  
\begin{multline*}
2 F_{n-2}^2 F_n+\left(4 F_{n+2}^2-3 F_{2 n}-6 F_{2 n+2}\right) F_n+2 F_{3 n}+7 F_{n+2} F_{2 n+2}=\\
 4 F_n^3+2 F_{n+2}^3+F_{n-2} \left(F_n^2+2 F_{n+2} F_n-3F_{n+2}^2-4 F_{2 n}+3 F_{2 n+2}\right)+5 F_{3 n+2},
\end{multline*}

\item  \label{FibonaIdentitySat:17}  
\begin{multline*}
24 F_n^2 F_{n+2}+21 F_{n+2} F_{2 n+2}+14 F_{2 n} F_{n+2}+6 F_{3 n-2}  +5 F_{n-2}^3=\\
F_{n-2}^2\left(2 F_n+F_{n+2}\right) +\left(19 F_n^2-13 F_{n+2} F_n-F_{n+2}^2+12 F_{2 n}-11 F_{2 n-2}+F_{2 n+2}\right) F_{n-2}+\\
7 F_n^3+10 F_{n+2}^3+13 F_{3 n}+F_{n+2} F_{2 n-2}+F_n \left(4 F_{n+2}^2+30 F_{2 n}+3 F_{2 n-2}-3 F_{2 n+2}\right)+11 F_{3 n+2}, 
\end{multline*}

\item  \label{FibonaIdentitySat:18} 
\begin{multline*}
6 F_{2 n} F_{n+2}+3 F_{3 n+2}+F_n^3+2F_{n+2}^3  +F_{n+2} F_{2 n-2}=
F_{n-2}^3+\left(2 F_n-F_{n+2}\right) F_{n-2}^2+\\
\left(-3F_n^2-5 F_{n+2} F_n+F_{n+2}^2+4 F_{2 n}+3 F_{2 n-2}-F_{2 n+2}\right) F_{n-2}+\\
5 F_{3 n}+4 F_n^2 F_{n+2}+5 F_{n+2} F_{2 n+2}+F_n \left(4 F_{n+2}^2-6 F_{2 n}+3 F_{2 n-2}-3 F_{2 n+2}\right)+2 F_{3 n-2},
\end{multline*}

\item  \label{FibonaIdentitySat:19}  

\begin{multline*}
 8 F_{2 n} F_{n+2}+3 F_{n+2} F_{2 n-2} +2 F_n^3=
F_{n-2}^3+\left(2 F_n-3 F_{n+2}\right) F_{n-2}^2+\\ 
\left(5 F_{2 n-2}-4 F_n F_{n+2}\right) F_{n-2}+
4 F_{3 n}+2 F_n^2 F_{n+2}+F_n \left(4 F_{n+2}^2-3 F_{2 n}+6 F_{2 n-2}\right)+4 F_{3 n-2},
\end{multline*}

\item  \label{FibonaIdentitySat:O20}  
\begin{multline*}
F_n\left(6 F_{n+2} F_{2 n+2}+2 F_{3 n}-3 F_{n+2}^3-4 F_{2 n} F_{n+2}-3 F_{3 n+2}\right)=\\
L_n\left(F_{n-2} F_n^2-3 F_{n+2} F_n^2+3 F_{2 n} F_n+2 \left(F_{n+2}^3-2 F_{2 n+2} F_{n+2}+F_{3 n+2}\right)\right)+\\
F_n \left(4 F_n^3-6 F_{n+2} F_n^2+F_{n-2} \left(3 F_n+2 F_{n+2}\right) F_n+\left(-6 F_{n+2}^2+6 F_{2 n}-2 F_{2 n-2}+4 F_{2 n+2}\right) F_n\right),
\end{multline*}
 
 \item  \label{FibonaIdentitySat:O21}  
 \begin{multline*}
 F_n (F_n  (6 F_{2 n}-2 F_{2 n-2}+4 F_{2 n+2} )-3 F_{3 n-2} )=\\
 F_n  (3 F_{n-2}^3+6 F_n F_{n-2}^2+ (-6 F_n^2-4 F_{n+2} F_n+8 F_{2 n}+6 F_{2 n-2} ) F_{n-2}-2F_n^3+4 F_{3 n}+3F_n^2 F_{n+2} )+\\
  L_n (F_{n-2}^3+ (2 F_{2 n-2}-3 F_n^2 ) F_{n-2}-3 F_n F_{2 n}+2 F_n^2 F_{n+2}+F_{3 n-2} ).
 \end{multline*}
 
 \end{enumerate}
\end{proposition}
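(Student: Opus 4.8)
The plan is to prove every part of the proposition by the single mechanism illustrated in Section~\ref{Examples_applicability_Jordan identities}: choose one of the Jordan identities of Lemma~\ref{JordanIdentities} or Lemma~\ref{JordanIdentitiesLemma2}, substitute explicit $2\times 2$ matrices for $a$, $b$, $c$, expand the ternary operation via \eqref{TernaryOpEqui}, and then compare one fixed entry of the two resulting matrices. The two sides are computed in two different guises. The left-hand side is written out entrywise directly in terms of the Fibonacci and Lucas entries of the substituted matrices, which produces products of several consecutive Fibonacci/Lucas numbers; the right-hand side is first simplified symbolically using the Jordan identity and only then written entrywise via the closed forms for matrix powers from Sections~\ref{Fibonacci_Identities_from_Jordan_Identities} and \ref{GeneralCase}, which produces a shorter combination. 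Equating a chosen entry of the two matrices yields the displayed scalar identity, exactly as $F_n^2+F_{n+1}^2=F_{2n+1}$ and the four identities in \eqref{example2Par2} were obtained.

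The building blocks are three matrices. The first is $\mathcal{M}_3$ of \eqref{Combinatorial_McLaughlin3}, whose $n$-th power has entries $(-1)^n F_{n+2},(-1)^n F_n,(-1)^n F_{n-2}$, and whose small shifts $\mathcal{M}_3^{n\pm 1}$ supply the terms $F_{n+1},F_{n+3}$ appearing in \eqref{FibonaIdentitySat:3}. For the parts that display indices quadratic in $n$ — namely \eqref{FibonaIdentitySat:7}, \eqref{FibonaIdentitySat:8}, \eqref{FibonaIdentitySat:O21}, with terms $F_{n^2},F_{n^2\pm 2},F_{n^2-n\pm 2},F_{(n-1)n}$ — one takes $a=\mathcal{M}_3^{\,n}$ instead of $a=\mathcal{M}_3$, so that $a^{n}=\mathcal{M}_3^{\,n^2}$, $a^{n-1}=\mathcal{M}_3^{\,n^2-n}$, and so on. The second is $\mathcal{T}_2$ of \eqref{TMatrices}–\eqref{TMatricesPower}, whose $n$-th power has entries $F_{2n+1},F_{2n},F_{2n-1}$, used whenever a term $F_{2n},F_{2n\pm 1}$ occurs. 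The third is $\mathcal{L}$ of \eqref{FiboLucasMatrices}, the matrix that introduces the Lucas numbers $L_n$ together with $5F_n$, used for the parts \eqref{FibonaIdentitySat:6}, \eqref{FibonaIdentitySat:7}, \eqref{FibonaIdentitySat:8}, \eqref{FibonaIdentitySat:O20}, \eqref{FibonaIdentitySat:O21} that contain an $L_n$. Concretely I would make these assignments. Parts \eqref{FibonaIdentitySat:1} and \eqref{FibonaIdentitySat:12}: Lemma~\ref{JordanIdentities}\eqref{JordanIdentity:1} with $a=\mathcal{M}_3$ and $b\in\{I_2,\mathcal{M}_3\}$, reading a diagonal entry for \eqref{FibonaIdentitySat:1} and an off-diagonal entry for \eqref{FibonaIdentitySat:12}. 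Parts \eqref{FibonaIdentitySat:3}–\eqref{FibonaIdentitySat:5}: Lemma~\ref{JordanIdentities}\eqref{JordanIdentity:2} (or Lemma~\ref{JordanIdentitiesLemma2}\eqref{JordanIdentityL2Part:2}) with $a=\mathcal{M}_3$ and $b$ one of $I_2$, $\mathcal{T}_2$, $\mathcal{L}$. Parts \eqref{FibonaIdentitySat:6}, \eqref{FibonaIdentitySat:O20} and their quadratic-index versions \eqref{FibonaIdentitySat:7}, \eqref{FibonaIdentitySat:8}, \eqref{FibonaIdentitySat:O21}: Lemma~\ref{JordanIdentities}\eqref{JordanIdentity:3} and Lemma~\ref{JordanIdentitiesLemma2}\eqref{JordanIdentityL2Part:1} with $c=\mathcal{L}$ (the source of the coefficients $3,5,6$ and the Lucas terms), using $a=\mathcal{M}_3$ in the first group and $a=\mathcal{M}_3^{\,n}$ in the second. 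Parts \eqref{FibonaIdentitySat:9}–\eqref{FibonaIdentitySat:11}: Lemma~\ref{JordanIdentities}\eqref{JordanIdentity:10} (or \eqref{JordanIdentity:13}) with $a=\mathcal{M}_3^{\,n}$ and $b=\mathcal{T}_2$, accounting for the mixed products such as $F_{n-2}F_{2n-1}F_n^2$. Parts \eqref{FibonaIdentitySat:13}–\eqref{FibonaIdentitySat:15}: Lemma~\ref{JordanIdentities}\eqref{JordanIdentity:12} with $a=\mathcal{M}_3$, $b=\mathcal{T}_2$, free exponents specialized so that the sum index becomes $\ell+n$; the three displayed identities are then the three independent entries of one $2\times 2$ matrix equation. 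Parts \eqref{FibonaIdentitySat:16}–\eqref{FibonaIdentitySat:19}: Lemma~\ref{JordanIdentities}\eqref{JordanIdentity:4} with $m=2$ and $a=\mathcal{M}_3$, which is precisely the identity whose indices run over $n$, $2n$, $3n$; the four parts correspond to the four entries and to the choices $b,c\in\{I_2,\mathcal{T}_2,\mathcal{L}\}$.

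Once the substitution is made the rest is formal: each side becomes a $2\times 2$ matrix whose entries are integer linear combinations of products of a bounded number of Fibonacci or Lucas numbers, and one only collects terms and matches them against the stated right-hand sides. The one bookkeeping point is the sign: because $\mathcal{M}_3^{\,k}=(-1)^k$ times a Fibonacci-valued matrix, every occurrence of a power of $\mathcal{M}_3$ contributes a sign, but the total $a$-degree of each identity in Lemma~\ref{JordanIdentities} is the same in every monomial of the identity (the identities are of total degree one in $b$, one in $c$, and of a fixed degree in $a$), so the resulting power of $-1$ is uniform across the whole equation and cancels. The main obstacle is therefore not conceptual but organizational and computational: with twenty-one parts, several carrying large displayed right-hand sides, the genuine work is (i) pinning down, for each part, the correct Jordan identity, the correct matrices $a,b,c$ (in particular whether $a$ is $\mathcal{M}_3$ or $\mathcal{M}_3^{\,n}$), the correct specialization of the free exponents $\ell,m,n$, and the correct entry to extract; and (ii) carrying out — preferably with a computer algebra system — the ensuing simplifications and checking that they reproduce each displayed identity verbatim. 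After the table of choices above is fixed, each individual verification is a short, mechanical computation of the kind already performed in Section~\ref{Examples_applicability_Jordan identities}.
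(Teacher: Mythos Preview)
Your overall strategy coincides with the paper's: every part is obtained by substituting $a=\mathcal{M}_3$ (from \eqref{Combinatorial_McLaughlin3}), $b=\mathcal{T}_2$ (from \eqref{TMatricesPower}), $c=\mathcal{L}$ (from \eqref{FiboLucasMatrices}) into one of the identities of Lemma~\ref{JordanIdentities}, expanding \eqref{TernaryOpEqui}, and reading off a matrix entry. Your observation that the global sign $(-1)^k$ coming from $\mathcal{M}_3^k$ is uniform across each identity and hence cancels is correct and is exactly the bookkeeping point needed.

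Where you diverge from the paper is in the identity-to-part table, and several of your guesses do not line up with the index pattern of the target identities. The paper keeps $b=\mathcal{T}_2$ and $c=\mathcal{L}$ fixed throughout and never replaces $a$ by $\mathcal{M}_3^{\,n}$; its assignment is simply: Part~\eqref{JordanIdentity:1}$\to$\eqref{FibonaIdentitySat:1}; Part~\eqref{JordanIdentity:2}$\to$\eqref{FibonaIdentitySat:3}--\eqref{FibonaIdentitySat:5}; Part~\eqref{JordanIdentity:3}$\to$\eqref{FibonaIdentitySat:6}; Part~\eqref{JordanIdentity:4}$\to$\eqref{FibonaIdentitySat:7},\eqref{FibonaIdentitySat:8}; Part~\eqref{JordanIdentity:6}$\to$\eqref{FibonaIdentitySat:9}--\eqref{FibonaIdentitySat:11}; Part~\eqref{JordanIdentity:9}$\to$\eqref{FibonaIdentitySat:12}; Part~\eqref{JordanIdentity:10}$\to$\eqref{FibonaIdentitySat:13}--\eqref{FibonaIdentitySat:15}; Part~\eqref{JordanIdentity:12}$\to$\eqref{FibonaIdentitySat:16}--\eqref{FibonaIdentitySat:19}; Part~\eqref{JordanIdentity:13}$\to$\eqref{FibonaIdentitySat:O20},\eqref{FibonaIdentitySat:O21}. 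In particular, the quadratic indices $n^2,\,n^2\pm n$ in \eqref{FibonaIdentitySat:7}--\eqref{FibonaIdentitySat:8} arise directly from the exponent $mn$ in Part~\eqref{JordanIdentity:4} with $m=n$ and $a=\mathcal{M}_3$, not from the device of taking $a=\mathcal{M}_3^{\,n}$; the indices $l+n,\,l+n\pm2$ in \eqref{FibonaIdentitySat:13}--\eqref{FibonaIdentitySat:15} come from $a^{m+l}$ in Part~\eqref{JordanIdentity:10}, not Part~\eqref{JordanIdentity:12}; the $3n$-indices in \eqref{FibonaIdentitySat:16}--\eqref{FibonaIdentitySat:19} come from $a^{l+m+n}$ in Part~\eqref{JordanIdentity:12} with $l=m=n$, not from Part~\eqref{JordanIdentity:4}; and \eqref{FibonaIdentitySat:O21} has only linear indices $2n,3n$ (it is not a ``quadratic-index version'' of anything) and is handled by Part~\eqref{JordanIdentity:13}. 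Your variant assignments may well also produce true identities, but they would not reproduce the displayed formulas verbatim, which is the stated goal.
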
  

\begin{proof}  This proof is a straightforward application of Lemma \ref{JordanIdentities}. In this lemma we use Parts \eqref{JordanIdentity:1}--\eqref{JordanIdentity:12}   
setting $a= \mathcal{M}_3$ from \eqref{Combinatorial_McLaughlin3}, 
$b= \mathcal{T}_{2}$ from \eqref {TMatricesPower}, and $c=\mathcal{L}$ from \eqref {FiboLucasMatrices}.

The Proof of Part \eqref{FibonaIdentitySat:1}  uses Lemma \ref{JordanIdentities} Part \eqref{JordanIdentity:1}.

The Proofs of Parts \eqref{FibonaIdentitySat:3}--\eqref{FibonaIdentitySat:5} use Lemma \ref{JordanIdentities} Part \eqref{JordanIdentity:2}.

The Proof of Part \eqref{FibonaIdentitySat:6} uses Lemma \ref{JordanIdentities} Part \eqref{JordanIdentity:3}.

The Proofs of Parts \eqref{FibonaIdentitySat:7} and \eqref{FibonaIdentitySat:8} use Lemma \ref{JordanIdentities} Part \eqref{JordanIdentity:4}.

The Proofs of Parts \eqref{FibonaIdentitySat:9}--\eqref{FibonaIdentitySat:11} use Lemma \ref{JordanIdentities} Part \eqref{JordanIdentity:6}.

The Proof of Part \eqref{FibonaIdentitySat:12} uses Lemma \ref{JordanIdentities} Part \eqref{JordanIdentity:9}.

The Proofs of Parts \eqref{FibonaIdentitySat:13}--\eqref{FibonaIdentitySat:15} use Lemma \ref{JordanIdentities} Part \eqref{JordanIdentity:10}.

The Proofs of Parts \eqref{FibonaIdentitySat:16}--\eqref{FibonaIdentitySat:19} use Lemma \ref{JordanIdentities} Part \eqref{JordanIdentity:12}.

The Proofs of Parts \eqref{FibonaIdentitySat:O20} and \eqref{FibonaIdentitySat:O21} use Lemma \ref{JordanIdentities} Part \eqref{JordanIdentity:13}.
  \end{proof}
  
\begin{proposition}\label{SantiagoLemma2} If $F_{n}$ is a Fibonacci number and $L_{n}$ is a Lucas number, then these identities hold

\begin{enumerate}
\item  \label{FibonaIdentitySat:O1} $8 F_{n-2}+16 F_{n-1}+5 F_{n+2}=5 F_n+13 F_{n+1}$,
\item  \label{FibonaIdentitySat:O2} $F_{n-1}+16 F_n+8 F_{n+1}=11 F_{n+2}+2 F_{n-2} $,
\item  \label{FibonaIdentitySat:O3} $4 F_n+8 F_{n+1}=11 F_{n-1}+6 F_{n-2}+3 F_{n+2}$, 
\item  \label{FibonaIdentitySat:O4}  
 \begin{multline*}
F_{m+1} F_{m+n}+7 F_{m+1} F_{m+n+1}=3 F_{m-1}^2 F_n+2 F_{m+1}^2 F_{n+1}+F_m^2 \left(7 F_n+4 F_{n+1}\right)+\\
F_{m-1} \left(F_{m+1} \left(F_n+3 F_{n+1}\right)+F_m \left(7 F_n+3 F_{n+1}\right)-7 F_{m+n}-3 F_{m+n+1}\right)+\\
F_m \left(2 F_{m+1} \left(F_n+4 F_{n+1}\right)-9 F_{m+n}-11 F_{m+n+1}\right)+4 F_{2 m+n}+5 F_{2 m+n+1},
 \end{multline*}

\item  \label{FibonaIdentitySat:O5} 
 \begin{multline*}
 F_{m-1}\left(11 F_{m+n-1}+12 F_{m+n}+F_{m+n+1}\right)=
 F_{m-1}^2 \left(5 F_{n-1}+F_n\right)+\\
 F_{m-1} \left(F_{m+1} \left(F_{n-1}+10 F_n+F_{n+1}\right)+F_m \left(11 F_{n-1}+12 F_n+F_{n+1}\right)\right)+\\ 
 2 F_{m+1}^2 F_n+10 F_{m+1}^2 F_{n+1}+F_m^2 \left(11 F_{n-1}+13 F_n+6 F_{n+1}\right)  -F_{m+1} F_{m+n-1}+\\ 
  -14 F_{m+1} F_{m+n}-21 F_{m+1} F_{m+n+1}+ 6 F_{2 m+n-1}+13 F_{2 m+n}+11 F_{2 m+n+1} +\\
  F_m \left(2 F_{m+1} \left(F_{n-1}+11 F_n+6 F_{n+1}\right)-13 F_{m+n-1}-34 F_{m+n}-13 F_{m+n+1}\right),
 \end{multline*}

\item  \label{FibonaIdentitySat:O6} 
 \begin{multline*}
  F_{m-1} \left( 4 F_{m+n}+F_{m+n+1}\right)+F_{m+1} F_{m+n-1}+6 F_{m+1} F_{m+n}+5 F_{m+1} F_{m+n+1}= F_{m-1}^2 \left(F_{n-1}+F_n\right)+\\
 F_{m-1} \left(F_{m+1} \left(F_{n-1}+2 F_n+F_{n+1}\right)+F_m \left(3 F_{n-1}+4 F_n+F_{n+1}\right)-3 F_{m+n-1}\right)+ \\
 2 F_{m+1}^2 F_n+2 F_{m+1}^2 F_{n+1}+F_m^2 \left(3 F_{n-1}+5 F_n+2 F_{n+1}\right)+\\
F_m \left(2 F_{m+1} \left(F_{n-1}+3 F_n+2 F_{n+1}\right)-5 \left(F_{m+n-1}+2 F_{m+n}+F_{m+n+1}\right)\right)+\\
2 F_{2 m+n-1}+5 F_{2 m+n}+3 F_{2 m+n+1}.
  \end{multline*}
  
\end{enumerate}
\end{proposition}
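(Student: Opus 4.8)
The plan is to proceed exactly as in the proofs of Propositions~\ref{ClassicIdentities} and~\ref{SantiagoTheorem7}: each displayed identity is obtained by substituting concrete $2\times 2$ matrices for $a,b,c$ in one of the two parts of Lemma~\ref{JordanIdentitiesLemma2} and then reading off the resulting equality of matrices entry by entry. First I would fix the matrix playing the role of $a$ to be one whose powers are known in closed form; the natural candidates from Sections~\ref{Fibonacci_Identities_from_Jordan_Identities} and~\ref{GeneralCase} are $\mathcal{F}_1$ (with $\mathcal{F}_1^n$ as in~\eqref{FibonacciGenCasePowers1}), $\mathcal{M}_3$ from~\eqref{Combinatorial_McLaughlin3}, and $\mathcal{L}$ from~\eqref{FiboLucasMatrices}. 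The shift pattern $F_{n-2},F_{n-1},F_n,F_{n+1},F_{n+2}$ appearing in Parts~\eqref{FibonaIdentitySat:O1}--\eqref{FibonaIdentitySat:O3} strongly suggests $a=\mathcal{F}_1$ there, while the combination of the indices $m+n$, $2m+n$ with the quadratic terms $F_m^2$, $F_{m\pm 1}^2$ in Parts~\eqref{FibonaIdentitySat:O4}--\eqref{FibonaIdentitySat:O6} suggests taking $b$ (or $c$) to be a power $\mathcal{F}_1^m$ so that a second index enters.

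Concretely, for the three linear identities in Parts~\eqref{FibonaIdentitySat:O1}--\eqref{FibonaIdentitySat:O3} I would apply Lemma~\ref{JordanIdentitiesLemma2} Part~\eqref{JordanIdentityL2Part:1}, $2\{a^{n},b,c\}\cdot a=\{a,\{a^{n-1},b,c\},a\}+\{a^{n+1},b,c\}$, with $a=\mathcal{F}_1$ and $b,c$ chosen to be fixed (constant) $0$--$1$ matrices; varying this choice over a short list of elementary matrices produces the three different linear relations. For Parts~\eqref{FibonaIdentitySat:O4}--\eqref{FibonaIdentitySat:O6} I would use Part~\eqref{JordanIdentityL2Part:2}, $\{a^{n},\{a^{m},b,a^{m}\},c\}=2\{a^{n+m},(a^{m}\cdot b),c\}-\{a^{n+2m},b,c\}$, again with $a=\mathcal{F}_1$ and with $b,c$ adjusted to recover the three displayed forms. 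In every case one then expands both sides into explicit $2\times 2$ matrices using the definition~\eqref{TernaryOpEqui} of $\{\cdot,\cdot,\cdot\}$ and the Jordan product $a\cdot b=(ab+ba)/2$, substitutes the closed forms for $\mathcal{F}_1^k$, and equates a convenient entry (typically the $(1,1)$ entry) of the two sides.

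The last step is simplification: the raw entry on each side is a polynomial in Fibonacci numbers with many more terms than the stated identity, and one collapses it using the recurrence $F_{k+1}=F_k+F_{k-1}$, the relation $L_k=F_{k-1}+F_{k+1}$, and standard index-shift and product identities until the two sides coincide; the displayed right-hand sides are deliberately left only partially reduced, so the target form is reached before any truly canonical form.

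I expect the only real obstacle to be bookkeeping rather than mathematics. Conceptually each part is a one-line substitution into an already-proved lemma, but the intermediate expressions---especially for Parts~\eqref{FibonaIdentitySat:O4}--\eqref{FibonaIdentitySat:O6}, whose right-hand sides already run to well over a dozen terms---are large enough that the verification is most safely carried out with a computer algebra system, and pinning down the precise $b$ and $c$ that reproduce the stated (non-canonical) shape of each identity takes a little trial. Neither difficulty is essential: the method is exactly the one illustrated in Section~\ref{Examples_applicability_Jordan identities} and used throughout Propositions~\ref{ClassicIdentities} and~\ref{SantiagoTheorem7}.
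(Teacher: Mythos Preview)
Your plan is essentially the paper's plan: apply Lemma~\ref{JordanIdentitiesLemma2} with $a=\mathcal{F}_1$, use Part~\eqref{JordanIdentityL2Part:1} for the three linear identities \eqref{FibonaIdentitySat:O1}--\eqref{FibonaIdentitySat:O3} and Part~\eqref{JordanIdentityL2Part:2} for \eqref{FibonaIdentitySat:O4}--\eqref{FibonaIdentitySat:O6}, then read off entries. The one concrete discrepancy is your choice of $b$ and $c$. The paper does \emph{not} vary $b,c$ over elementary $0$--$1$ matrices; it fixes $b=\mathcal{T}_2$ from~\eqref{TMatrices} and $c=\mathcal{L}$ from~\eqref{FiboLucasMatrices} once and for all, and the several identities in each group come from the distinct entries of the single resulting $2\times 2$ matrix equation. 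This is why the displayed coefficients $(8,16,5,13,\ldots)$ look the way they do: they are produced by the specific entries of $\mathcal{T}_2$ and $\mathcal{L}$, not by a sweep over $E_{ij}$'s. Your variant would still yield correct Fibonacci identities (any $b,c$ will, since the Jordan identity is universal), but you would not recover the stated non-canonical forms without hitting on those particular matrices, and the ``little trial'' you anticipate would in fact be the whole difficulty.
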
  

\begin{proof}  This proof is a straightforward application of Lemma \ref{JordanIdentitiesLemma2}. Set $a=\mathcal{F}_{1}^{n}$ from  \eqref{FibonacciGenCasePowers1},    
$b=\mathcal{T}^{n}_{2}$ from  \eqref{TMatricesPower}, and $c= \mathcal{L}^n$ from \eqref{FiboLucasMatrices}.

The Proofs of Parts \eqref{FibonaIdentitySat:O1}--\eqref{FibonaIdentitySat:O3} use Lemma \ref{JordanIdentitiesLemma2} Part \eqref{JordanIdentityL2Part:1}.

The Proofs of Parts \eqref{FibonaIdentitySat:O4}--\eqref{FibonaIdentitySat:O6} use Lemma \ref{JordanIdentitiesLemma2} Part \eqref{JordanIdentityL2Part:2}.
 \end{proof}
 
\subsection{Binomial transform of Fibonacci numbers identities}\label{Identities_from_Jordan_GeneralCaseMcLaughlinThm}

In this section, we use the sequence give in Section \ref{GeneralCaseMcLaughlinThm} and the identities from  
Section \ref{JordanAlgebras}.

\begin{proposition}\label{kthBinomialTransformIden} If $k, n \ge 1$ and $i\in \{-1,0,1\}$ and 
$$h_{n,k}(j)=\sum _{i=0}^n (-1)^{i-1+j}  \binom{n}{i} F_{i-j} (k+1)^{n-i},$$
then these identities hold

\begin{enumerate}
\item \label{kthBinomialTransformIden:S1} $$h_{n,k}(1) (h_{m,k}(1) h_{n,k}(1) - h_{m+n,k}(1)) + h_{n,k}^{2}(0) h_{m,k}(-1)=h_{n,k}(0) (h_{m+n,k}(0) - 2 h_{m,k}(0) h_{n,k}(1)), $$

\item \label{kthBinomialTransformIden:S2}  
\begin{multline*}
h_{m+n,k}(0) (h_{n,k}(1) + h_{n,k}(-1))=
2 h_{m,k}(0) (h_{n,k}^{2}(0) + h_{n,k}(1) h_{n,k}(-1)) + \\   
h_{n,k}(0) (2 h_{m,k}(1) h_{n,k}(1) - h_{m+n,k}(1) + 2 h_{m,k}(-1) h_{n,k}(-1) - h_{m+n,k}(-1)),    
   \end{multline*}   
    
\item \label{kthBinomialTransformIden:S3} 
\begin{multline*}
h_{n,k}^{2}(0) h_{m,k}(1)  +  h_{n,k}(-1) (h_{m,k}(-1) h_{n,k}(-1) - h_{m+n,k}(-1))= \\ 
h_{n,k}(0) (h_{m+n,k}(0) - 2 h_{m,k}(0) h_{n,k}(-1)),
\end{multline*}  
 
 \item \label{kthBinomialTransformIden:S4} 
 \begin{multline*}
 h_{n+1,k}^{2}(0) + h_{n+1,k}^{2}(1)= h_{n,k}^{2}(0) (1+(1+k)^2) + (1+k)^2 h_{n,k}^{2}(1)  + \\ 
 h_{n,k}^{2}(-1) +  2(1+k)  h_{n,k}(0)(h_{n,k}(1) + h_{n,k}(-1)),
\end{multline*} 
  
 \item \label{kthBinomialTransformIden:S5} 
 $h_{n,k}^{3}(1) = h_{n,k}(0) h_{2n,k}(0) + h_{n,k}(1) h_{2n,k}(1) - h_{n,k}^{2}(0) (2 h_{n,k}(1) + h_{n,k}(-1))$,

 \item \label{kthBinomialTransformIden:S6} 
  \begin{multline*}
 2 h_{n,k}^{3}(0)=  h_{2n,k}(0) (h_{n,k}(1) + h_{n,k}(-1)) + h_{n,k}(0) (-2 h_{n,k}^{2}(1) +\\  h_{2n,k}(1) - 2 h_{n,k}(1) h_{n,k}(-1) - 2 h_{n,k}^{2}(-1) +  h_{2n,k}(-1)),
\end{multline*}  
    
\item \label{kthBinomialTransformIden:S7} 
 \begin{multline*}
 h_{n,k}^{2}(1) h_{2n,k}(1)= h_{n,k}^{4}(0) + h_{n,k}^{4}(1) -  h_{n,k}(0) h_{2n,k}(0) (h_{n,k}(1) +  h_{n,k}(-1)) \\+ h_{n,k}^{2}(0) (3 h_{n,k}^{2}(1)- h_{2n,k}(1) + 2 h_{n,k}(1) h_{n,k}(-1) + h_{n,k}^{2}(-1)),
 \end{multline*} 
 
 \item \label{kthBinomialTransformIden:S8} 
  \begin{multline*}
 2 h_{n,k}^{2}(0) h_{2n,k}(0)= 4 h_{n,k}^{3}(0) (h_{n,k}(1) + h_{n,k}(-1)) -  h_{2n,k}(0) (h_{n,k}^{2}(1) + h_{n,k}^{2}(-1)) \\
 +  h_{n,k}(0) (h_{n,k}(1) + h_{n,k}(-1)) (2 h_{n,k}^{2}(1) - h_{2n,k}(1) + 2 h_{n,k}^{2}(-1) - h_{2n,k}(-1)),
 \end{multline*} 

\item \label{kthBinomialTransformIden:S9}  $ h_{n,k}^{3}(1)  + h_{3n,k}(1) +  h_{n,k}^{2}(0) (2 h_{n,k}(1) + h_{n,k}(-1))=2 (h_{n,k}(0) h_{2n,k}(0) + h_{n,k}(1) h_{2n,k}(1))$,

 \item \label{kthBinomialTransformIden:S10} 
 \begin{multline*} 
 h_{2n,k}(0) (h_{n,k}(1) + h_{n,k}(-1))= h_{n,k}^{3}(0) + h_{3n,k}(0)  + h_{n,k}(0) (h_{n,k}^{2}(1) - h_{2n,k}(1) + \\
 h_{n,k}(1) h_{n,k}(-1) + h_{n,k}^{2}(-1) - h_{2n,k}(-1)),
  \end{multline*}

 \item \label{kthBinomialTransformIden:S11} 
 \begin{multline*}  
h_{n,k}(1) ( 2 h_{n,k}(1) h_{2n,k}(1)-h_{n,k}^{3}(1) - h_{3n,k}(1))=
 h_{n,k}^{4}(0)  +\\
   h_{n,k}(0)(h_{3n,k}(0) - h_{2n,k}(0) (3 h_{n,k}(1) + h_{n,k}(-1))) + h_{n,k}^{2}(0) (3 h_{n,k}^{2}(1) - h_{2n,k}(1) \\
   + 2 h_{n,k}(1) h_{n,k}(-1) + h_{n,k}^{2}(-1) -  h_{2n,k}(-1)). 
\end{multline*} 
    
\end{enumerate}
\end{proposition}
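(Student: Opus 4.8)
The plan is to obtain every one of \eqref{kthBinomialTransformIden:S1}--\eqref{kthBinomialTransformIden:S11} by the same device used throughout Section \ref{Examples_applicability_Jordan identities}: fix $a=\mathcal{T}_{k+1}$ from \eqref{TMatrices}, choose $b$ and $c$ among $I_2$ and the powers $\mathcal{T}^{n}_{k+1}$, $\mathcal{T}^{2n}_{k+1}$, substitute them into one part of Lemma \ref{JordanIdentities} or Lemma \ref{JordanIdentitiesLemma2}, expand each side with the ternary--product rule \eqref{TernaryOpEqui} and the Jordan product $a\cdot b=(ab+ba)/2$, and read off the $(1,1)$ entry. Because every matrix that occurs is a symmetric $2\times 2$ matrix that is a polynomial in $\mathcal{T}_{k+1}$, these matrices commute pairwise, so the Jordan and ternary operations collapse to ordinary matrix multiplication and each side becomes an explicit $2\times 2$ matrix; by \eqref{TMatricesPower} its entries are polynomials in the numbers $h_{n,k}(1)$, $h_{n,k}(0)$, $h_{n,k}(-1)$, and equating the $(1,1)$ entries of the two sides yields the stated scalar identity. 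Once this is done, substituting the closed form $h_{n,k}(j)=\sum_{i=0}^{n}(-1)^{i-1+j}\binom{n}{i}F_{i-j}(k+1)^{n-i}$ turns each identity into the corresponding relation among binomial sums, exactly as in the worked example of Section \ref{GeneralCase}.

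I would then run through the list part by part, recording in each case the exact triple $(a^{?},b,c)$ that is substituted, in the style of the proofs of Propositions \ref{ClassicIdentities} and \ref{SantiagoLemma2}. The instance with $b=I_2$ and $c=\mathcal{T}^{n}_{k+1}$ in Part \eqref{JordanIdentity:9} is precisely \eqref{kthBinomialTransformIden:S5} and is already displayed in Section \ref{GeneralCase}; the remaining cases are the same template applied to the other parts of the two lemmas. I expect the multi-index identities \eqref{kthBinomialTransformIden:S1}--\eqref{kthBinomialTransformIden:S4} to come from the parts carrying independent exponents $m,n$ (Parts \eqref{JordanIdentity:1}, \eqref{JordanIdentity:2}, \eqref{JordanIdentity:9} of Lemma \ref{JordanIdentities}, together with Parts \eqref{JordanIdentityL2Part:1}, \eqref{JordanIdentityL2Part:2} of Lemma \ref{JordanIdentitiesLemma2} for the one relating consecutive powers); the identities \eqref{kthBinomialTransformIden:S5}--\eqref{kthBinomialTransformIden:S8}, in which only $h_{n,k}$ and $h_{2n,k}$ (and cubes and fourth powers) appear, from the ``doubling'' Parts \eqref{JordanIdentity:3} and \eqref{JordanIdentity:6}; and the identities \eqref{kthBinomialTransformIden:S9}--\eqref{kthBinomialTransformIden:S11}, in which $h_{3n,k}$ appears, from Parts \eqref{JordanIdentity:12} and \eqref{JordanIdentity:13} specialised to $l=m=n$. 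Writing out each resulting $2\times 2$ matrix equality and extracting the $(1,1)$ entry is then routine.

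Two bookkeeping points will need care rather than ideas. First, the subscript alignment: \eqref{TMatricesPower} records the off-diagonal entry of $\mathcal{T}^{n}_{k+1}$ with index $n-1$, whereas the identities in the statement carry the off-diagonal quantity indexed by $n$, so each application must be set up so that the off-diagonal entry lands on $h_{n,k}(0)$, which amounts to absorbing the shift into the exponents chosen for $a^{n}$, $b$, and $c$; keeping this consistent across all eleven cases is the one place a mechanical slip could creep in. Second, after the $(1,1)$ entry has been extracted one still has to regroup the raw polynomial produced by the matrix multiplication into the form displayed in the statement; this is elementary algebra in the $h_{n,k}(j)$ but is the bulk of the writing. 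The main obstacle, in short, is not conceptual --- the passage from a Jordan identity to a matrix identity is entirely mechanical --- but organisational: pairing each of \eqref{kthBinomialTransformIden:S1}--\eqref{kthBinomialTransformIden:S11} with the right part of the two lemmas and the right powers of $\mathcal{T}_{k+1}$, and then not dropping a sign or a subscript while expanding products of $2\times 2$ matrices whose entries are themselves indexed families.
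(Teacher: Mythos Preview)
Your plan matches the paper's proof: set $a=\mathcal{T}_{k+1}$, take $b,c\in\{I_2,\mathcal{T}_{k+1}\}$, apply the indicated parts of Lemma~\ref{JordanIdentities} (Part~\eqref{JordanIdentity:1} for \eqref{kthBinomialTransformIden:S1}--\eqref{kthBinomialTransformIden:S3}, Part~\eqref{JordanIdentity:2} for \eqref{kthBinomialTransformIden:S4}, Part~\eqref{JordanIdentity:3} for \eqref{kthBinomialTransformIden:S5}--\eqref{kthBinomialTransformIden:S6}, Part~\eqref{JordanIdentity:6} for \eqref{kthBinomialTransformIden:S7}--\eqref{kthBinomialTransformIden:S8}, Part~\eqref{JordanIdentity:12} for \eqref{kthBinomialTransformIden:S9}--\eqref{kthBinomialTransformIden:S10}, Part~\eqref{JordanIdentity:13} for \eqref{kthBinomialTransformIden:S11}), expand via \eqref{TernaryOpEqui} and \eqref{TMatricesPower}, and compare entries. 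One small adjustment: you should read off the $(1,2)$ and $(2,2)$ entries as well as the $(1,1)$ entry, since each single application of a Jordan identity to these symmetric matrices produces three independent scalar relations, and that is how, for instance, \eqref{kthBinomialTransformIden:S1}--\eqref{kthBinomialTransformIden:S3} all arise from one instance of Part~\eqref{JordanIdentity:1}.
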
  

\begin{proof}  
Proof of Parts \eqref{kthBinomialTransformIden:S1}--\eqref{kthBinomialTransformIden:S3}. These proofs are straightforward applications of Lemma \ref{JordanIdentities} Part \eqref{JordanIdentity:1} by setting $a=\mathcal{T}_{k+1}$, and $b=\mathcal{T}_{k+1}$.

Proof of Part \eqref{kthBinomialTransformIden:S4}. This proof is a straightforward application of Lemma \ref{JordanIdentities} 
Part \eqref{JordanIdentity:2} by setting $a=\mathcal{T}_{k+1}$ and $b=I_2$. 

Proof of Parts \eqref{kthBinomialTransformIden:S5} and \eqref{kthBinomialTransformIden:S6}. These proofs are straightforward applications of 
Lemma \ref{JordanIdentities}  Part \eqref{JordanIdentity:3} by setting $a=\mathcal{T}_{k+1}$, and $b=I_2$ and $c=\mathcal{T}_{k+1}$. 

Proof of Parts \eqref{kthBinomialTransformIden:S7} and \eqref{kthBinomialTransformIden:S8}. These proofs are straightforward applications of 
Lemma \ref{JordanIdentities}  Part \eqref{JordanIdentity:6} by setting $a=\mathcal{T}_{k+1}$, and $b=\mathcal{T}_{k+1}$. 

Proof of Parts \eqref{kthBinomialTransformIden:S9} and \eqref{kthBinomialTransformIden:S10}. These proofs are straightforward applications of 
Lemma \ref{JordanIdentities}  Part \eqref{JordanIdentity:12} by setting $a=\mathcal{T}_{k+1}$, and $b=c=I_2$. 

Proof of Part \eqref{kthBinomialTransformIden:S11}. This proof is a straightforward application of 
Lemma \ref{JordanIdentities}  Part \eqref{JordanIdentity:13} by setting $a=\mathcal{T}_{k+1}$, $b=I_2$, and $c=\mathcal{T}_{k+1}$. 
 \end{proof}
 
\subsection{Pell identities from Jordan identities}\label{Pell_Identities_from_Jordan_Identities}

We recall that the  \emph{Pell numbers} sequence is given by the recursive relation   
$p_{n}=2 p_{n-1}+p_{n-2}$, where $p_{0}=0$, $p_{1}=1$. 

\begin{proposition}\label{PellIdentities} If $P_{n}$ is a Pell number, then these identities hold

\begin{enumerate}
\item  \label{PellIdentity:1}  $P_{m} P_{n}+P_{m+1} P_{n+1}$ =$ P_{m+n+1}$, 
\item  \label{PellIdentity:2}  \begin{multline*}
P_{m-1} (n P_{n-1}-n P_{n}+P_{n})+P_{m} ((n+1) P_{n-1}+ 2 n P_{n}-(n-1) P_{n+1})+ \\
P_{m+1} ((n+1) P_{n}+n P_{n+1})=$ $ n (P_{m+n-1}+P_{m+n+1})+2 P_{m+n},
\end{multline*} 
\item  \label{PellIdentity:3}  $P_{n+2}  =P_{n}+2 P_{n+1} $,  
\item  \label{PellIdentity:4}  $ 2 P_{n+2} P_{n+1}+P_{n} P_{n+2} =P_{n}^2+ P_{n+1}(2 P_{n}+ 5 P_{n+1}-P_{n-1})$,
\item  \label{PellIdentity:5}  $ P_{n+1}^2+2 P_{n+2} P_{n+1}-P_{n} P_{n+2}=-P_{n}^2+2 P_{n+1} P_{n}+P_{n+1} (P_{n-1}+4 P_{n+1}) $, 
\item  \label{PellIdentity:6}  $ P_{2 n+1}=  P_{n}^2+P_{n+1}^2 $, 
\item  \label{PellIdentity:7}  $ 4 P_{2 n}+3 P_{2 n-1}+2 P_{2 n+1}=  3 P_{n-1}^2+5 P_{n}^2+2 P_{n+1}^2+4 P_{n} P_{n-1}+4 P_{n} P_{n+1}$,
\item  \label{PellIdentity:8}  $ 7 P_{n}^2=$ $ -3 P_{n-1}P_{n-1}+2 P_{n}P_{n-1}-4P_{n+1}^{2}-2 P_{2 n}+2 P_{n} P_{n+1} +3 P_{2 n-1}+4 P_{2 n+1}$,  
\item  \label{PellIdentity:9} $ 3 P_{n}^2=$ $- P_{n-1}(3 P_{n-1}+2 P_{n})+2 P_{2 n}- 2P_{n} P_{n+1}+3 P_{2 n-1}$, 
\item  \label{PellIdentity:10} 
 \begin{multline*} 2P_{n} (3 P_{m n}-P_{m n+1})+18 P_{n+1} P_{m n+1} = -P_{n}^2 (2 P_{m n-n}+3 P_{m n-n-1})+ \\
 2P_{n+1} (3 P_{m n-n}-P_{m n-n+1}) P_{n}+9 (P_{m n-n+1} P_{n+1}^2+P_{m n+n+1}),
\end{multline*}  
 
\item   \label{PellIdentity:11} 
 \begin{multline*}
P_{n-1} (4 P_{m n}+3 P_{m n-1}+2 P_{m n+1})+P_{n+1} (4 P_{m n}+3 P_{m n-1}+2 P_{m n+1}) \\
+P_{n} (-2 P_{m n}-3 P_{m n-1}+9 P_{m n+1})=  P_{n}^2(2 P_{m n-n}-3 P_{m n-n-1}-4 P_{m n-n+1})+\\ 
P_{n-1} (P_{n+1} (4 P_{m n-n}+3 P_{m n-n-1}+2 P_{m n-n+1})-P_{n} (2 P_{m n-n}+3 P_{m n-n-1}))+ \\
9 P_{n+1} P_{m n-n+1} P_{n}+3 P_{m n+n-1}+4 P_{m n+n}+2 P_{m n+n+1}, 
\end{multline*} 
\item   \label{PellIdentity:12} 
\begin{multline*}
P_{n-1} (2 P_{m n}-3 P_{m n-1}-4 P_{m n+1})+P_{n+1} (2 P_{m n}-3 P_{m n-1}-4 P_{m n+1})+ \\
P_{n} (-2 P_{m n}-3 P_{m n-1}+9 P_{m n+1})$ =  $P_{n}^2 (4 P_{m n-n}+3 P_{m n-n-1}+2 P_{m n-n+1})+\\
9 P_{n+1} P_{m n-n+1} P_{n}-P_{n-1} (P_{n} (2 P_{m n-n}+3 P_{m n-n-1})+ \\ 
P_{n+1} (-2 P_{m n-n}+3 P_{m n-n-1}+4 P_{m n-n+1}))- 3 P_{m n+n-1}+2 P_{m n+n}-4 P_{m n+n+1}, 
\end{multline*}

\item   \label{PellIdentity:13} 
\begin{multline*}
2P_{n} (3 P_{m n}-P_{m n+1})-2P_{n-1} (2 P_{m n}+3 P_{m n-1})+P_{n-1}^2(2 P_{m n-n}+3 P_{m n-n-1})= \\ 
2 P_{n} P_{n-1}(3 P_{m n-n}-P_{m n-n+1}) +9 P_{n}^2 P_{m n- n+1}-3 P_{m n+n-1}-2 P_{m n+n},
\end{multline*}
 
\item   \label{PellIdentity:14} $(n-1) P_{n-1} P_{n}^2=2 P_{n+1} P_{n}^2+(n-1) P_{2 n} P_{n}+(n+1) P_{n+1} \left(P_{n+1}^2-P_{2 n+1}\right)$, 
\item   \label{PellIdentity:15}  
\begin{multline*}
P_{n-1} \left(n P_{n}^2-2 P_{n+1} P_{n}-(n-1) P_{2 n}+n \left(P_{2 n+1}-P_{n+1}^2\right)\right)+P_{n+1} ((n+1) P_{2 n}+n P_{2 n-1})=\\
2 P_{n}^3-n P_{n+1} P_{n}^2+\left(2 (n+1) P_{n+1}^2+2 n P_{2 n}+n P_{2 n-1}-P_{2 n-1}-n P_{2 n+1}-P_{2 n+1}\right) P_{n}+\\
P_{n-1}^2 (n P_{n+1}-2 (n-1) P_{n}), 
\end{multline*}

\item   \label{PellIdentity:16} 
\begin{multline*}
 P_{n-1}  (n P_{n}^2+2 P_{n+1} P_{n}+(n-1) P_{2 n}+n  (P_{2 n+1}-P_{n+1}^2 ) ) +2 P_{n}^3+n P_{n+1} P_{n}^2 =\\ 
   (-2 (n+1) P_{n+1}^2+2 n P_{2 n}-n P_{2 n-1}+P_{2 n-1}+n P_{2 n+1}+P_{2 n+1} ) P_{n} +\\
    P_{n-1}^2 (2 (n-1) P_{n}+n P_{n+1})+ 
   P_{n+1} ((n+1) P_{2 n}-n P_{2 n-1}), 
\end{multline*}

\item   \label{PellIdentity:17} $(n-1) P_{n-1}^3=\left(2 P_{n}^2+(n-1) P_{2 n-1}\right) P_{n-1}+(n+1) P_{n} (P_{n} P_{n+1}-P_{2 n})$, 
\item   \label{PellIdentity:18} $(P_{l-1}-P_{l+1}) (P_{m+1} P_{n}+P_{m} P_{n+1})=P_{l} (P_{m-1} P_{n+1}+P_{m+1} (P_{n-1}-2 P_{n+1}))$,  
\item   \label{PellIdentity:19}  $2 (P_{l-1}-P_{l+1}) P_{m} P_{n}=P_{l} (P_{n}(P_{m-1}-P_{m+1}) +P_{m} (P_{n-1}-P_{n+1}))$,

\item   \label{PellIdentity:20}  
\begin{multline*}
(1-n) P_{l+m}^2 +P_{l+m+1}^2+n P_{l+m+1}^2 =P_{l}^2 \left((n+1) P_{m}^2-(n-1) P_{m-1}^2\right)+\\ 
2 P_{l+1} P_{l} P_{m} ((n+1) P_{m+1}-(n-1) P_{m-1})+P_{l+1}^2 \left((n+1) P_{m+1}^2-(n-1) P_{m}^2\right),
\end{multline*}

\item   \label{PellIdentity:21}  
\begin{multline*}
P_{l+m-1} (n P_{l+m+1}-(n-1) P_{l+m})-  P_{l+m} (n P_{l+m}-(n+1) P_{l+m+1})= \\
 P_{l-1} P_{l+1} (P_{m-1} (-n P_{m}+n P_{m+1}+P_{m})+P_{m} ((n+1) P_{m+1}-n P_{m}))+ \\ 
P_{l} P_{l+1} \left((n+1) P_{m+1}^2-(n-1) P_{m}^2\right)+P_{l}^2 (P_{m} (n P_{m}+(n+1) P_{m+1})+\\
P_{l-1} P_{l} \left((n+1) P_{m}^2-(n-1) P_{m-1}^2\right)+ P_{m-1} (-n P_{m}-n P_{m+1}+P_{m})), 
\end{multline*}

\item   \label{PellIdentity:22}   
\begin{multline*}
P_{l+m+1} ((n+1) P_{l+m}-n P_{l+m-1})-P_{l+m} ((n-1) P_{l+m-1}-n P_{l+m})=\\
P_{l}^2 P_{m-1} (-n P_{m}+n P_{m+1}+P_{m})+P_{l}^2P_{m} ((n+1) P_{m+1}-n P_{m})+ \\
P_{l+1} P_{l} \left((n+1) P_{m+1}^2-(n-1) P_{m}^2\right)+P_{l+1} P_{l-1} P_{m} (n P_{m}+(n+1) P_{m+1})+ \\
P_{l}P_{l-1} \left((n+1) P_{m}^2-(n-1) P_{m-1}^2\right)+ P_{l+1} P_{l-1} P_{m-1} (-n P_{m}-n P_{m+1}+P_{m}), 
\end{multline*}

\item   \label{PellIdentity:23} 
\begin{multline*}
P_{l+m-1}^2+P_{l+m}^2 = P_{l-1}^2 \left((n+1) P_{m}^2-(n-1) P_{m-1}^2\right)+ \\
2 P_{l} P_{l-1} P_{m} ((n+1) P_{m+1}-(n-1) P_{m-1})+n P_{l+m-1}^2-n P_{l+m}^2+\\ 
P_{l}^2 \left((n+1) P_{m+1}^2-(n-1) P_{m}^2\right), 
\end{multline*}

\item   \label{PellIdentity:24} $P_{l+m+1}^2=P_{l}^2 P_{m}^2+2 P_{l} P_{l+1} P_{m+1} P_{m}+P_{l+1}^2 P_{m+1}^2$, 

\item   \label{PellIdentity:25} 
\begin{multline*}
P_{l+m}^2=P_{l+m-1} P_{l+m+1}+2 P_{l+m} P_{l+m+1}-2 P_{l+1} P_{l} P_{m+1}^2- \\
P_{l}^2  (P_{m}^2+ 2 P_{m+1} P_{m}-P_{m-1} P_{m+1} )-P_{l-1}  (2 P_{l} P_{m}^2+ \\
P_{l+1}  (-P_{m}^2+2 P_{m+1} P_{m}+P_{m-1} P_{m+1} ) ), 
\end{multline*}

\item  \label{PellIdentity:26}  
\begin{multline*}
 P_{n} P_{l+m}+P_{m} P_{l+n} +3 P_{m} P_{l+n+1}+P_{m+1} P_{l+n+1}=P_{l} P_{m+1} P_{n}+P_{m-1} P_{l+n+1} \\
 +P_{l} P_{m}(6  P_{n}+ P_{n+1})+(P_{n-1}-3 P_{n}-P_{n+1}) P_{l+m+1}+P_{l+1} (2 P_{m} P_{n}-P_{m-1} P_{n+1}+ \\
 3 P_{m} P_{n+1}+P_{m+1} (-P_{n-1}+3 P_{n}+2 P_{n+1})), 
 \end{multline*}
 
\item  \label{PellIdentity:27} 
\begin{multline*}
P_{n-1} P_{l+m-1} +P_{n-1} P_{l+m+1}+2 P_{l+1} P_{m} P_{n}+20 P_{n} P_{l+m}+2 P_{n} P_{l+m+1}+2 P_{n} P_{l+m-1}+\\ 
P_{n+1} P_{l+m-1}+21 P_{n+1} P_{l+m+1} +P_{m-1} P_{l+n-1}+2 P_{m} P_{l+n-1}+P_{m+1} P_{l+n-1} +20 P_{m} P_{l+n} + \\
P_{m-1} P_{l+n+1}+2 P_{m} P_{l+n+1}+21 P_{m+1} P_{l+n+1}+16 P_{l+m+n}= P_{l+1} P_{m+1} P_{n-1}+10 P_{n-1} P_{l+m}+\\
2 P_{l+1} P_{m+1} P_{n}+P_{l+1} P_{m-1} P_{n+1}+2 P_{l+1} P_{m} P_{n+1}+20 P_{l+1} P_{m+1} P_{n+1}+6 P_{n+1} P_{l+m}+\\
P_{l-1} (P_{m+1} (P_{n-1}+2 P_{n})+P_{m-1} P_{n+1}+2 P_{m} (9 P_{n}+P_{n+1}))+ \\ 
P_{l} (-10 P_{m-1} P_{n+1}+ 4 P_{m} (6 P_{n}+ 5 P_{n+1})+P_{m+1} (4 (5 P_{n}+P_{n+1}) -10 P_{n-1}))+ \\
10 P_{m-1} P_{l+n}+6 P_{m+1} P_{l+n}+2 P_{l+m+n-1}+22 P_{l+m+n+1}, 
 \end{multline*}
\item  \label{PellIdentity:28} 
\begin{multline*}
6 P_{n+1} P_{l+m}+3 P_{m} P_{l+n-1}+P_{m} P_{l+n}+6 P_{m+1} P_{l+n}+2 P_{l+m+n-1}+P_{n} P_{l+m}= P_{l} P_{m+1} P_{n}+\\
P_{l-1} (4 P_{m} P_{n}-P_{m+1} (P_{n-1}-3 P_{n})-P_{m-1} P_{n+1}+3 P_{m} P_{n+1})+P_{m-1} P_{l+n-1}+\\ 
P_{m+1} P_{l+n-1}+ P_{l} P_{m} P_{n+1}+6 P_{l} P_{m+1} P_{n+1}+(P_{n-1}-3 P_{n}+P_{n+1}) P_{l+m-1}+6 P_{l+m+n}, 
 \end{multline*}

\item  \label{PellIdentity:29} 
\begin{multline*} 
P_{n} (P_{l+1} (P_{m}-3 P_{m+1})-P_{l+m}+3 P_{l+m+1})=\\
P_{l} (-2 P_{m-1} P_{n}+6 P_{m} P_{n}-P_{m} P_{n+1}+3 P_{m+1} P_{n+1}+P_{m+n}-3 P_{m+n+1}),
 \end{multline*}  

\item  \label{PellIdentity:30}  
\begin{multline*} 
P_{n-1} P_{l+m-1}+11 P_{l-1} P_{m+n+1}+16 P_{l+m+n}+P_{n+1} P_{l+m-1}+P_{l-1} P_{m+n-1}= \\
4 P_{n} P_{l+m-1}-12 P_{n} P_{l+m}+P_{l-1} P_{m-1} P_{n+1}-8 P_{l-1} P_{m} P_{n+1}+ \\
11 P_{l-1} P_{m+1} P_{n+1}+8 P_{n+1} P_{l+m}-11 P_{n+1} P_{l+m+1}+8 P_{l-1} P_{m+n} +\\
-  2 P_{l} (-6 P_{m} P_{n-1}-4 P_{m} P_{n}-P_{m+1} P_{n}+P_{m-1} (2 P_{n-1}+P_{n})-2 P_{m+n-1}+6 P_{m+n})+\\ 
P_{l+1} (P_{m-1} P_{n-1}-8 P_{m} P_{n-1}+11 P_{m+1} P_{n-1}-P_{m+n-1}+8 P_{m+n}-11 P_{m+n+1})+ \\
8 P_{n-1} P_{l+m}-11 P_{n-1} P_{l+m+1}-4 P_{l-1} P_{m-1} P_{n}+12 P_{l-1} P_{m} P_{n}+ 
2 P_{l+m+n-1}+22 P_{l+m+n+1},
 \end{multline*}
 
\item  \label{PellIdentity:31}  
\begin{multline*} 
6 P_{n-1} P_{l+m}+3 P_{n} P_{l+m+1}+2 P_{l+m+n-1}=2 P_{n-1} P_{l+m-1}+6 P_{l+m+n}+P_{n} P_{l+m}+\\
P_{l-1} (-2 P_{m-1} P_{n-1}+6 P_{m} P_{n-1}-P_{m} P_{n}+3 P_{m+1} P_{n}+2 P_{m+n-1}-6 P_{m+n})+ \\
P_{l} (-P_{m} P_{n-1}+3 P_{m+1} P_{n-1}+P_{m+n}-3 P_{m+n+1}).
 \end{multline*} 
\end{enumerate}
\end{proposition}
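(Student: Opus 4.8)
The plan is to imitate, for Pell numbers, the proofs of Propositions~\ref{SantiagoTheorem7} and \ref{SantiagoLemma2}. Each of the thirty-one identities is obtained by choosing one part of Lemma~\ref{JordanIdentities} or of Lemma~\ref{JordanIdentitiesLemma2}, substituting a triple of explicit $2\times 2$ matrices, expanding both sides with the ternary operation \eqref{TernaryOpEqui} and the Jordan product $x\cdot y=(xy+yx)/2$, and then comparing one entry of the two resulting matrices; the write-up would, as in the earlier proofs, simply list part by part which Jordan identity and which matrices are used.

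In every case I would take $a=\mathcal{P}_2$ from \eqref{FibonacciGenCase2}, so that by \eqref{FibonacciGenCasePowers2} each power $a^k$ contributes the Pell numbers $P_{k-1},P_k,P_{k+1}$. For the remaining two slots I would use, case by case, the identity matrix $I_2$, a power of the Pell matrix $\mathcal{P}_2$ itself, and the matrix with first row $(2,1)$ and second row $(-1,0)$ --- the base matrix of $\mathcal{M}_1$ in \eqref{Combinatorial_McLaughlin1} --- whose $n$-th power has entries $n+1,\,n,\,-n,\,-n+1$; it is exactly the appearance of this last matrix in a $b$- or $c$-slot that produces the linear coefficients $n$ and $n\pm1$ seen in Parts~\eqref{PellIdentity:2}, \eqref{PellIdentity:14}--\eqref{PellIdentity:17} and \eqref{PellIdentity:20}--\eqref{PellIdentity:23}. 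The structural fingerprints fix most of the assignment: the identities carrying a product index $mn$, Parts~\eqref{PellIdentity:10}--\eqref{PellIdentity:13}, come from Lemma~\ref{JordanIdentities} Part~\eqref{JordanIdentity:4}; the identities carrying an index $2n$, Parts~\eqref{PellIdentity:6}--\eqref{PellIdentity:9} and \eqref{PellIdentity:14}--\eqref{PellIdentity:17}, come from Parts~\eqref{JordanIdentity:3} and \eqref{JordanIdentity:6}; the simplest two-index identities, Parts~\eqref{PellIdentity:1}--\eqref{PellIdentity:5}, come from Parts~\eqref{JordanIdentity:1} and \eqref{JordanIdentity:2}; and the three-index identities, Parts~\eqref{PellIdentity:18}--\eqref{PellIdentity:31}, come from Parts~\eqref{JordanIdentity:9}, \eqref{JordanIdentity:10}, \eqref{JordanIdentity:12}, \eqref{JordanIdentity:13} and from the two parts of Lemma~\ref{JordanIdentitiesLemma2}.

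After the substitution each side is an explicit $2\times 2$ matrix whose entries are polynomials in Pell numbers; I would multiply the products out, isolate the chosen entry, and reduce it to the stated form using only the Pell recurrence $P_{k+1}=2P_k+P_{k-1}$ and the four Pell addition formulas $P_{m+n+1}=P_{m+1}P_{n+1}+P_mP_n$, $P_{m+n}=P_{m+1}P_n+P_mP_{n-1}=P_mP_{n+1}+P_{m-1}P_n$, $P_{m+n-1}=P_mP_n+P_{m-1}P_{n-1}$, which are nothing but the four entries of $\mathcal{P}_2^{m+n}=\mathcal{P}_2^m\mathcal{P}_2^n$. No step needs an idea beyond these; for a degenerate case such as Part~\eqref{PellIdentity:3} ($P_{n+2}=P_n+2P_{n+1}$) one only has to refrain from applying the recurrence to one side.

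The hard part will be purely bookkeeping. Because the base matrix of $\mathcal{M}_1$ is not symmetric, its Jordan product with a symmetric matrix is not symmetric either, so the $(1,2)$ and $(2,1)$ entries give genuinely different identities that must be tracked separately; and for the three-index statements \eqref{PellIdentity:18}--\eqref{PellIdentity:31} the intermediate expressions run to a dozen or more quadratic and cubic Pell monomials, so checking that they collapse to the precise closed forms quoted above is best delegated to a computer algebra system --- presumably how the identities were found in the first place.
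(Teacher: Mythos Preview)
Your overall strategy---instantiate one Jordan identity at a time with $a=\mathcal{P}_2$ and read off a matrix entry---is exactly the paper's, and your partition of the thirty-one parts among Lemma~\ref{JordanIdentities}~\eqref{JordanIdentity:1}--\eqref{JordanIdentity:13} is almost entirely correct. The differences are in the bookkeeping you flagged as the ``hard part.'' First, the paper does \emph{not} use $I_2$ or powers of $\mathcal{P}_2$ in the $b$- and $c$-slots; it fixes $b=\mathcal{M}_1$ and $c=\mathcal{M}_2$ from \eqref{Combinatorial_McLaughlin1}--\eqref{Combinatorial_McLaughlin2} (also mentioning $\mathcal{S}_k$), and for Parts~\eqref{JordanIdentity:12}--\eqref{JordanIdentity:13} it swaps in $c=\mathcal{L}$ from \eqref{FiboLucasMatrices}. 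This matters: the specific numerical coefficients in, e.g., Parts~\eqref{PellIdentity:7}--\eqref{PellIdentity:13} and \eqref{PellIdentity:26}--\eqref{PellIdentity:31} are dictated by the entries $3,1,-2,0$ of $\mathcal{M}_2$ and by $\mathcal{L}$, and would not come out of your $I_2$/$\mathcal{P}_2^k$ choices. Second, Parts~\eqref{PellIdentity:24} and \eqref{PellIdentity:25} are obtained from Lemma~\ref{JordanIdentities} Part~\eqref{JordanIdentity:2} (a second pass through that identity), not from a three-index identity as you grouped them. Third, the paper does not invoke Lemma~\ref{JordanIdentitiesLemma2} anywhere in this proposition. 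With those three adjustments to your table of choices, your write-up would coincide with the paper's.
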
  

\begin{proof} This proof is a straightforward application of Lemma \ref{JordanIdentities}. In this lemma we use Parts  
\eqref{JordanIdentity:1}--\eqref{JordanIdentity:12} setting $a=\mathcal{P}_{2}$ from  \eqref{FibonacciGenCasePowers2}, 
$b= \mathcal{M}_1$ from \eqref {Combinatorial_McLaughlin1}, $c=\mathcal{M}_2$ from \eqref {Combinatorial_McLaughlin2},  
$d=\mathcal{S}_{k}$ from \eqref{SMatrices}, and to use Parts \eqref{JordanIdentity:12} and \eqref{JordanIdentity:13} of the lemma  
we set $c=\mathcal{L}$ from \ref{FiboLucasMatrices}.

The Proofs of Parts \eqref{PellIdentity:1} and  \eqref{PellIdentity:2} use Lemma \ref{JordanIdentities} Part \eqref{JordanIdentity:1}.

The Proofs of Parts \eqref{PellIdentity:3}--\eqref{PellIdentity:5} use Lemma \ref{JordanIdentities} Part \eqref{JordanIdentity:2}.

The Proofs of Parts \eqref{PellIdentity:6}--\eqref{PellIdentity:9} use Lemma \ref{JordanIdentities} Part \eqref{JordanIdentity:3}.

The Proofs of Parts \eqref{PellIdentity:10}--\eqref{PellIdentity:13} use Lemma \ref{JordanIdentities} Part \eqref{JordanIdentity:4}.

The Proofs of Parts \eqref{PellIdentity:14}--\eqref{PellIdentity:17} use Lemma \ref{JordanIdentities} Part \eqref{JordanIdentity:6}.

The Proofs of Parts \eqref{PellIdentity:18} and \eqref{PellIdentity:19} use Lemma \ref{JordanIdentities} Part \eqref{JordanIdentity:9}.

The Proofs of Parts \eqref{PellIdentity:20}--\eqref{PellIdentity:23} use Lemma \ref{JordanIdentities} Part \eqref{JordanIdentity:10}.

The Proofs of Parts \eqref{PellIdentity:24}  and \eqref{PellIdentity:25} use Lemma \ref{JordanIdentities} Part \eqref{JordanIdentity:2}.

The Proofs of Parts \eqref{PellIdentity:26}--\eqref{PellIdentity:28} use Lemma \ref{JordanIdentities} Part \eqref{JordanIdentity:12}.

The Proofs of Parts \eqref{PellIdentity:29}--\eqref{PellIdentity:31} use Lemma \ref{JordanIdentities} Part \eqref{JordanIdentity:13}.
\end{proof}

\section{Appendix. Mathematica programing}\label{Appendix1} 
In this section, we share our programs that we made in Mathematica. Where $\text{Mc}[A\_, n\_]$ is $A^n$ given in Theorem \ref{McLaughlinThm}, 

\textbf{Input}.  An integer $n$ and a matrix 

$A=
\left[\begin{array}{ll}
a& b \\
c & d \\
\end{array}
\right]$. 

\textbf{Output}. Matrix with sequences associated to $A$.

\subsection{Construction of Mc Laughlin Matrix from Theorem \ref{McLaughlinThm}}
Here $\textsc{Y}[A\_,n\_]$ is $y_n$ and $\textsc{Mc}[A\_, n\_]$ is $A^n$ as given in Theorem \ref{McLaughlinThm}. 

$\textsc{Y}[A\_,n\_]:=\sum _{i=0}^{\textsc{Floor}[ \frac{n}{2}]}\textsc{Binomial}[n-i,i] \textsc{Tr}[A]^{n-2 i} (-\textsc{Det}[A] )^i $;

\begin{multline*} \textsc{Mc}[A\_, n\_] := 
 \{\{\textsc{Y}[A, n] - A[[2]][[2]]*\textsc{Y}[A, n - 1],  A[[1]][[2]]*\textsc{Y}[A, n - 1]\}, \\ \{A[[2]][[1]]*\textsc{Y}[A, n - 1],  \textsc{Y}[A, n] - A[[1]][[1]]*\textsc{Y}[A, n - 1]\}\};
 \end{multline*}

\subsection{Construction of Mc Laughlin Matrix using \eqref{McLaughlinBinet} and \eqref{McLaughlinSeq}}
Here  $\textsc{Z}[A\_,n\_]$ is as in \eqref{McLaughlinBinet}  and  $\textsc{McIden}[A\_, n\_]$ is as in $\eqref{McLaughlinSeq}$.

$\alpha[A\_] = (1/2) (\textsc{Tr}[A] + \sqrt{\textsc{Tr}[A]^2 - 4 \textsc{Det}[A]});$

$\beta[A\_] =(1/2) (\textsc{Tr}[A] - \sqrt{\textsc{Tr}[A]^2 - 4 \textsc{Det}[A]});$

$\textsc{Z}[A\_,n\_]:=\textsc{Simplify}\left[\frac{\alpha [A]^n-\beta [A]^n}{\alpha [A]-\beta [A]}\right];$

\begin{multline*} 
\textsc{McIden}[A\_, n\_] := \{\{\textsc{Z}[A, n]*A[[1]][[1]] - \textsc{Z}[A, n - 1]*\textsc{Det}[A], \textsc{Z}[A, n]*A[[1]][[2]]\}, \\ \{\textsc{Z}[A, n]*A[[2]][[1]], \textsc{Z}[A, n]*A[[2]][[2]] - \textsc{Z}[A, n - 1]*\textsc{Det}[A]\}\};
\end{multline*}

\subsection{Using Jordan Identities}
In this section, we give functions to evaluate the Jordan product and the ternary Jordan product and one of the identities from Section \ref{JordanAlgebras} 
(we give only one identity, in a similar way the other identities can be defined).  
Here $\textsc{JordanP}[A\_, B\_]$ is the Jordan product and $\textsc{TernaryP}[A\_, B\_, C\_]$ is the ternary product. 

\textbf{Input}.  An integer $n$ and $2\times2$ matrices $A$, $B$, $C$. 

\textbf{Output}. An identity of matrices.

$\textsc{JordanP}[A\_, B\_] := (1/2) (A.B + B.A)$;

$\textsc{TernaryP}[A\_, B\_, C\_] := (1/2) ((A.B).C + (C.B).A)$;

\begin{multline*} 
\textsc{Identity1}[An\_, Am\_, Bn\_, AmSn\_] := \textsc{Print}[\textsc{MatrixForm}[TernaryP[An, Am, Bn]] , \\ `` = ",  \textsc{MatrixForm}[\textsc{JordanP}[AmSn, Bn]]];
\end{multline*}

This function can be used with any matrices associated to a recurrence relation.  For example, if $A= \{\{1, 1\}, \{1, 0\}\}$, we can take $An=\textsc{McIden}[A, n]$, 
$Am=\textsc{McIden}[A, m]$, $Bn=\{\{1,0\},\{0,1\}\}$, and $AmSn=\textsc{McIden}[A, m+n]$ into $\textsc{Identity1}[An, Am, Bn, AmSn] $ to obtain fibonacci numerical   
values $n$ and $m$. (If we want a symbolic identity it is possible to do some manipulation on $\textsc{Binomial}[n-i,i]$ such that it provides symbolic results).

Note 1. The coding in Mathematica for the identities and some matrices will be available on the webpage  \url{http://macs.citadel.edu/florez/research.html} .

Note 2. Again, there are still many things, on how this connection works, that we would like to understand better. For example, we are wondering  
under what conditions the identities given by Glennie \cite{Glennie} can be used to obtain new identities --under the context of this paper. 
We only know that some identities associated to powers have good behavior.

\section{Acknowledgement}  	
The last author was partially supported by The Citadel Foundation.


\begin{thebibliography}{99}
\bibitem{Wani} V.~H.~Badshah, G.~P.~S.~Rathore, K~Sisodiya, and  A.~A.~Wani, A two-by-two matrix representation of a generalized Fibonacci sequence,    
\emph{Hacet. J. Math. Stat.}  \textbf{47} (2018), 637--648.

\bibitem{Basin} S.~L.~Basin and V.~E.~Hoggatt,~Jr., 
A primer on the Fibonacci numbers--part II,  \emph{Fibonacci Quart.} \textbf{2} (1963), 61--68.

\bibitem{Basu} M.~Basu and M.~Das, Tribonacci matrices and a new coding theory, 
\emph{Discrete Math. Algorithms Appl.}  \textbf{6} (2014), no. 1.

\bibitem{Benjamin} A.~Benjamin and J.~Quinn, \emph{Proofs that really count. The art of combinatorial proof}, The Dolciani Mathematical Expositions, 27.  
Mathematical Association of America, 2003.

\bibitem{Bicknell} M.~Bicknell,  A primer on the Pell sequence and related sequences, \emph{Fibonacci Quart.} \textbf{13} (1975), 345--349.

\bibitem{Ferns} H.~H.~Ferns, Elementary problems and solutions, B106, \emph{Fibonacci Quart.} \textbf{5} (1967), 466--467.

\bibitem{Florez} R.~Fl\'orez, N.~McAnally, and A.~Mukherjee, Identities for the generalized Fibonacci polynomial, 
\emph{Integers}, \textbf{18B} (2018), Paper No. A2.

\bibitem{Glennie} M.~C.~Glennie, Some identities valid in special Jordan algebras but not valid in all Jordan algebras,  \emph{Pacific J. Math.} \textbf{16} (1966), 47--59.

\bibitem{Gould} H.~W.~Gould, A history of the Fibonacci $Q$-matrix and a higher-dimensional problem,  \emph{Fibonacci Quart.}  \textbf{19} (1981), 250--257.

\bibitem{Hall} M.~Jr.~Hall, An identity in Jordan rings,   \emph{Proc. Amer. Math. Soc.}  \textbf{7} (1956), 990--998.

\bibitem{JacobsonMacDonald} N.~Jacobson, MacDonald's theorem on Jordan algebras,  \emph{Arch. Math. (Basel)},  \textbf{13} (1962), 241--250.

\bibitem{Jacobson}  N.~Jacobson, A coordinatization theorem for Jordan algebras,   \emph{Proc. Nat. Acad. Sci.}  \textbf{48} (1962), 1154--1160.

\bibitem{JacobsonPaige} N.~Jacobson and L.~J.~Paige, On Jordan algebras with two generators,   \emph{J. Math. Mech.}  \textbf{6} (1957), 895--906.

\bibitem{Johnson} R.~C.~Johnson, Fibonacci numbers and matrices, \url{http://maths.dur.ac.uk/~dma0rcj/PED/fib.pdf}, 2009.

\bibitem{Macdonald} I.~G.~Macdonald, Jordan algebras with three generators, \emph{Proc. London Math. Soc.} \textbf{3} (1960), 395--408.

\bibitem{MelhamShannon} R.~S.~Melham and A.~G.~Shannon, Some summation identities using generalized $Q$-matrices,  \emph{Fibonacci Quart. }  \textbf{33} (1995), 64--73.

\bibitem{McLaughlin} J.~Mc~Laughlin, Combinatorial identities deriving from the $n$-th power of a $2\times 2$ matrix, \emph{Integers}, \textbf{4} (2004), A19, 15 pp.

\bibitem{koshy} T.~Koshy, \emph{Fibonacci and Lucas Numbers with Applications}, John Wiley, New York, 2001.

\bibitem{Moore} S.~Moore, Fibonacci Matrices, \emph{The Mathematical Gazette}, \textbf{67.439} (1983), 56--57. 

\bibitem{RugglesIII} I.~D.~Ruggles and V.~E.~Hoggatt,~Jr. A Primer on the Fibonacci Sequence, Part III, \emph{Fibonacci Quart.}  \textbf{1} (1963), 61--65.

\bibitem{Williams} K.~S.~Williams, The nth Power of a $2 \times 2$ Matrix (in Notes), \emph{Mathematics Magazine}, \textbf{65.5} (1992), 336.

\bibitem{sloane} N.~J.~A.~Sloane, The On-Line Encyclopedia of Integer Sequences, \url{http://oeis.org/}.

\bibitem{Waddill} M.~E.~Waddill, Using matrix techniques to establish properties of a generalized Tribonacci sequence. Applications of Fibonacci numbers, \textbf{4} (Winston-Salem, NC, 1990), 299--308, Kluwer Acad. Publ., Dordrecht, 1991.

\bibitem{Vajda} S.~Vajda,  \emph{Fibonacci and Lucas numbers, and the golden section. Theory and applications}, Halsted Press (John Wiley), 1989.

\end{thebibliography}
\end{document}